\tikzset{cross/.style={path picture={
  \draw
    (path picture bounding box.south east)--(path picture bounding box.north west)
    (path picture bounding box.south west)--(path picture bounding box.north east);}}}
\newtheorem{theorem}{Theorem} 
\newtheorem{corollary}{Corollary} 
\newtheorem{proposition}{Proposition} 
\newtheorem{lemma}{Lemma} 
\newtheorem{definition}{Definition} 
\newtheorem{rmk}{Remark} 
\newtheorem{ex}{Example} 
\newtheorem{question}{Question} 
\newtheorem{conjecture}{Conjecture} 
\DeclareMathOperator{\HS}{HS}
\DeclareMathOperator{\HF}{HF}
\DeclareMathOperator{\ini}{in}
\begin{document}
\title{On decomposing 
monomial algebras with the Lefschetz properties}

\author{Oleksandra Gasanova
  }\address{Department of Mathematics, Uppsala University, SE-751 06 Uppsala, Sweden}\email{oleksandra.gasanova@math.uu.se} 

\author{Samuel
  Lundqvist}\address{Department of Mathematics, Stockholm University, SE-106 91 Stockholm, Sweden}\email{samuel@math.su.se} 
  
  \author{Lisa
  Nicklasson}\address{Dipartimento di Matematica, Università degli Studi di Genova, Via Dodecaneso 35, 16146 Genova, Italy}\email{nicklasson@dima.unige.it}


  

     \subjclass{13E10; 13H10; 13C40; 13F55} 
  
\begin{abstract}
 We introduce a general technique for decomposing monomial algebras which 
we use to study the Lefschetz properties. 
In particular, we prove that Gorenstein codimension three algebras arising from numerical semigroups have the strong Lefschetz property, and we give partial results on monomial almost complete intersections.
We also study the reverse of the decomposition process -- a gluing operation -- which gives a way to construct monomial algebras with the Lefschetz properties.
\end{abstract}
\maketitle

\section{Introduction}

The Lefschetz properties of graded Artinian algebras is an active area of research in commutative algebra with connections to many branches of mathematics, including algebraic geometry, combinatorics, topology, and representation theory, see \cite{book} and \cite{atour} for an overview. Recall that an Artinian standard graded algebra $A$ has the WLP if there is a linear form $\ell$ such that the map induced by multiplication by $\ell$ has maximal rank 
 in every degree, while $A$ has the SLP if the map induced by $\ell^i$ 
 has maximal rank
 in every degree, for all $i$. Although the research community is making steady progress on the classification of graded Artinian algebras with respect to the Lefschetz properties, there are many natural families of algebras which still remain uncategorized. 

In this paper we study some of these classes, including codimension three Gorenstein algebras and equigenerated monomial almost complete intersections.
To perform most of these studies, we introduce a method to decide whether a monomial algebra has one of the Lefschetz properties by splitting its $\Bbbk$-basis into smaller pieces.

Even though our technique is only applicable for monomial algebras, we can 
derive results 
also on non-monomial algebras using a result by Wiebe \cite{ini}: if $\Bbbk[x_1,\ldots,x_n]/\ini_{\prec}(I)$ has the weak/strong Lefschetz property (WLP/SLP), then so does  $\Bbbk[x_1,\ldots,x_n]/I$, giving the connection to the monomial case.
Here $\ini_{\prec}(I)$ denotes the initial ideal with respect to the monomial order $\prec$. 

The class of Gorenstein algebras that we consider may be seen as a generalization of the
Gorenstein codimension three algebras associated to the Ap\'ery set of a numerical semigroup described by   Guerrieri \cite{guerrieri}. We show that algebras in this class have the SLP, which extends and simplifies recent results by Mir\'{o}-Roig and Tran on the presence of the WLP for a subclass of the Ap\'ery set class described above \cite{gorensteinnumerical,gorensteinnumerical2}.
Our class of Gorenstein algebras with the SLP also covers a class described by Iarrobino, McDaniel, and Seceleanu  \cite[Corollary 5.14] {gorensteinglue}.

In the monomial almost complete intersection part, we restrict ourselves to the equigenerated case. We 
show that $\Bbbk[x_1,\ldots,x_n]/(x_1^d,\ldots,x_n^d,x_1^{d-1} x_{2})$, $\Bbbk$ a field of characteristic zero, has the SLP for all $n$ and conjecture necessary and sufficient conditions for the WLP and the SLP in codimension three. Moreover, we prove the necessary part in the WLP case and, using a recent result by Cook II and Nagel \cite{maci}, provide a reduction for the sufficient part.

We also give examples in the literature which can be seen as special cases of our method, and which inspired us to investigate the basis splitting argument; a result on the WLP for powers of monomial complete intersections by Boij, Fr\"oberg and the second author \cite{powers}, and a result on the SLP of complete intersections of products of linear forms by Juhnke-Kubitzke, Mir\'o-Roig, Murai, and Wachi  \cite{productlinear}.

It is our hope that the combination of Wiebe's result with our decomposition method will provide a useful technique for  establishing the Lefschetz properties also for other classes of graded Artinian algebras than we consider in this paper. 

We organize the paper as follows.
In \Cref{sec:splitting} we introduce our basis splitting argument, which gives a way of decomposing and gluing algebras. 
 In \Cref{sec:gor} we apply the decomposition method and give a class of Gorenstein algebras with the SLP. In \Cref{sec:table} we introduce the class of \emph{ideals coming from tables}, extending our result in \Cref{sec:gor}.
In \Cref{sec:maci} we decompose 
equigenerated monomial almost complete intersection algebras, to give results on the Lefschetz properties of these algebras. 
The last part, \Cref{sec:gluing}, concerns the gluing operation, which can be used to construct monomial algebras with the Lefschetz properties.

Finally, some words on the notation.
For the rest of the paper, 
$\Bbbk$ will denote a field of characteristic zero, and $R$ will denote the standard graded polynomial algebra in $n$ variables over $\Bbbk$. If $I$ is a monomial ideal in $R$, we let $I^c$ denote the set of monomials in $R$ which are not in $I$. For any $f \in R$ and any subset $X$ of $\Bbbk[x_1,\ldots,x_n]$, we let $fX=\{fg \ | \ g \in X\}$. For a graded algebra $S=R/I$, $I$ a homogeneous ideal, 
we let $\HF(S,i)$ denote the $\Bbbk$-dimension of the $i$th graded component $S_i$. Also, we let 
$\HS(S,t)$ denote the Hilbert series of $S$; $\HS(S,t) = \sum_i \HF(S,i) t^i$.

\section{A basis splitting argument for monomial algebras} \label{sec:splitting}

We begin by a well known result, which we state as a lemma. 
\begin{lemma}\label{lemma:glue}
Let $K$ be a monomial ideal in $R$ and let $m$ be a monomial. Let $I=K+(m)$ and $J=K:(m)$ Then 
$$K^c = I^c \sqcup mJ^c, $$

and it follows that 
\[
\HS(R/K,t) = \HS(R/I,t) +t^{\deg(m)}\HS(R/J,t).\]
\end{lemma}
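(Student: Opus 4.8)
The plan is to prove the set-theoretic decomposition first, and then deduce the Hilbert series identity by counting monomials in each degree.

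\textbf{Step 1: the partition of $K^c$.} I would argue that every monomial $u \notin K$ falls into exactly one of two cases, according to whether $m$ divides $u$ or not. If $m \nmid u$, then $u \notin (m)$, so $u \notin I = K + (m)$ (using that $I$ is monomial, so $u \in I$ iff $u$ is divisible by a generator of $K$ or by $m$); hence $u \in I^c$. Conversely, any $u \in I^c$ satisfies $u \notin K$ and $u \notin (m)$, so it is a monomial of $K^c$ not divisible by $m$. This identifies $I^c$ with exactly the monomials of $K^c$ that are not multiples of $m$. If instead $m \mid u$, write $u = m v$ for a (unique) monomial $v$; then $u \notin K$ translates precisely to $v \notin (K : (m)) = J$, i.e. $v \in J^c$. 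So the multiples of $m$ in $K^c$ are exactly $m J^c$, and the map $v \mapsto mv$ is a bijection $J^c \to (K^c \cap (m))$. Since the two cases are mutually exclusive and exhaustive, $K^c = I^c \sqcup m J^c$.

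\textbf{Step 2: passing to Hilbert series.} For a monomial ideal $L$, the classes of the monomials in $L^c$ form a $\Bbbk$-basis of $R/L$, so $\HF(R/L, i)$ equals the number of monomials of degree $i$ in $L^c$, and $\HS(R/L,t) = \sum_i (\#\{u \in L^c : \deg u = i\})\, t^i$. Counting degree-$i$ monomials on both sides of $K^c = I^c \sqcup m J^c$: the disjoint union gives additivity of the counts, and the bijection $v \mapsto mv$ shifts degrees by $\deg(m)$, so the number of degree-$i$ monomials in $mJ^c$ equals the number of degree-$(i - \deg m)$ monomials in $J^c$. Assembling the generating functions yields $\HS(R/K,t) = \HS(R/I,t) + t^{\deg(m)}\HS(R/J,t)$.

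\textbf{Main obstacle.} There is no serious obstacle here; this is genuinely a bookkeeping lemma. The only points that need a little care are (i) using that $I$ and $K$ are \emph{monomial} ideals so that membership of a monomial is detected by divisibility by a minimal generator, which makes the dichotomy $m \mid u$ versus $m \nmid u$ clean, and (ii) the definition of the colon ideal: $m v \notin K \iff v \notin K:(m)$, which is exactly the defining property of $K:(m)$ and again is transparent in the monomial setting. One should also note $K \subseteq I$, so $I^c \subseteq K^c$, and that $mJ^c \subseteq K^c$ by the colon computation, to be sure the right-hand side really sits inside $K^c$ before claiming equality.
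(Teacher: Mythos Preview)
Your proof is correct and follows exactly the same approach as the paper: split the monomials of $K^c$ according to whether $m$ divides them, identify the two pieces with $I^c$ and $mJ^c$ respectively, and read off the Hilbert series identity. The paper's proof is simply a terser version of your Step~1 (it leaves Step~2 implicit), so there is nothing to add.
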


\begin{proof}
Monomials in $K^c$ which are not divisible by $m$ are exactly monomials in $(K+(m))^c=I^c$.
Monomials in $K^c$ which are divisible by $m$ are multiples of $m$ by monomials in $(K:(m))^c=J^c.$
\end{proof}


If $I$, $J$, $K$, and $m$ are as in Lemma \ref{lemma:glue} we say that $K$ can be \emph{decomposed} into $I$ and $J$ w.\,r.\,t.\ $m$. We may also think of $K$ as a \emph{gluing} of $I$ and $J$ w.\,r.\,t.\ $m$, an operation discussed further in \Cref{sec:gluing}.

Before we state and prove our main decomposition result we recall that when $A$ has the WLP (SLP), the element $\ell$ for which the multiplication maps have maximal rank is called a weak (strong) Lefschetz element. If $A$ is a monomial algebra, $A$ has the WLP (SLP) if and only if the sum of the variables is a weak (strong) Lefschetz element \cite{MMN}. 

\begin{theorem}\label{thm:glue_maxrank}
Let $K$, $m$, $I$, $J$ be as in \Cref{lemma:glue}, and in addition assume 
\[\HF(R/I,i)\!<\!\HF(R/I,i+d) \implies \HF(R/J,i-\deg(m))\! \le\! \HF(R/J,i-\deg(m)+d)\]
and
\[\HF(R/I,i)\!>\!\HF(R/I,i+d) \implies \HF(R/J,i-\deg(m))\! \ge \! \HF(R/J,i-\deg(m)+d)\]
for all $i$ and some positive integer $d$. Let $f$ be a form of degree $d$. If multiplication by $f$ has maximal rank in every degree in $R/I$ and $R/J$ then it does so in $R/K$. 

In particular, if the above assumptions on the Hilbert functions hold for $d=1$ (respectively, for all $d\ge 1$), then $R/K$ has the WLP (respectively, SLP) if both $R/I$ and $R/J$ do.
\end{theorem}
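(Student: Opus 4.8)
The plan is to work with the decomposition $K^c = I^c \sqcup mJ^c$ from \Cref{lemma:glue}, which gives a direct sum decomposition of the vector space $R/K$ as $(R/I) \oplus m(R/J)$ on the level of monomial bases, compatible with the grading after the degree shift by $\deg(m)$ on the second summand. First I would fix the form $f$ of degree $d$ and ask how multiplication by $f$ acts with respect to this splitting. The key observation is that the first piece $I^c$ is closed under the operation ``multiply by $f$, then discard all monomials landing in $I$'': more precisely, for a monomial $u \in I^c$, write $fu = \sum c_v v$; the terms $v$ lying in $I$ but not in $K$ must be divisible by $m$, hence are of the form $m \cdot(\text{monomial in } J^c)$, so they land in the second summand, while terms in $I^c$ stay in the first summand. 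Thus in the basis $I^c \sqcup mJ^c$ the matrix of $\times f$ on $R/K$ in each degree is block lower-triangular: the $(R/I)$-to-$(R/I)$ block is exactly the multiplication map $\times f$ on $R/I$, and the $m(R/J)$-to-$m(R/J)$ block is exactly $\times f$ on $R/J$ (shifted), while there is a possibly nonzero block from $(R/I)$ into $m(R/J)$ and a zero block in the other direction (multiplying $m$ times something in $J^c$ by $f$ can never produce a monomial not divisible by $m$).

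Given this triangular structure, the rank of $\times f \colon (R/K)_i \to (R/K)_{i+d}$ is at least the rank of the upper-left block plus the rank of the lower-right block, i.e. at least $\rho_I(i) + \rho_J(i-\deg(m))$, where $\rho_I, \rho_J$ denote the ranks of the corresponding multiplication maps on $R/I$ and $R/J$. So the plan is: assuming $\times f$ has maximal rank on both $R/I$ and $R/J$, we have $\rho_I(i) = \min(\HF(R/I,i), \HF(R/I,i+d))$ and likewise for $J$, and we must show
\[
\min(\HF(R/I,i),\HF(R/I,i+d)) + \min(\HF(R/J,i-\deg m),\HF(R/J,i-\deg m+d))
\]
equals $\min(\HF(R/K,i),\HF(R/K,i+d))$, using that $\HF(R/K,j) = \HF(R/I,j)+\HF(R/J,j-\deg m)$ from the Hilbert series identity. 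This is precisely where the two Hilbert-function hypotheses enter: they force the ``min'' to be attained on the same side (both at degree $i$, or both at degree $i+d$) for the $I$-part and the $J$-part simultaneously, so that the sum of the two minima equals the min of the two sums. A short case analysis on the sign of $\HF(R/I,i+d) - \HF(R/I,i)$ handles this — the hypotheses are exactly the statements ``if the $I$-piece wants the min on one side, the $J$-piece is not forced to the other side.'' Combined with the trivial upper bound $\rho_K(i) \le \min(\HF(R/K,i),\HF(R/K,i+d))$, we conclude $\times f$ has maximal rank on $R/K$.

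The final ``in particular'' sentence is then immediate: by the characterization quoted before the theorem, a monomial algebra has the WLP (resp. SLP) iff the sum of the variables is a weak (resp. strong) Lefschetz element, i.e. iff $\ell = x_1 + \cdots + x_n$ (resp. each $\ell^d$, equivalently each degree-$d$ power) has maximal rank in every degree; so applying the first part with $f = \ell$ and $d=1$ gives WLP, and applying it with $f = \ell^d$ for every $d \ge 1$ gives SLP, provided the Hilbert-function hypotheses hold for $d=1$ (resp. for all $d \ge 1$). I expect the main obstacle to be the bookkeeping in the triangularity argument — carefully justifying that the off-diagonal block goes only from the $I$-part to the $m(R/J)$-part and never back, and that the diagonal blocks are literally the multiplication maps on the quotients rather than merely related to them — together with the case analysis showing the two ``min'' choices are forced to agree; the rest is the Hilbert series identity from \Cref{lemma:glue} and the standard fact that a block-triangular matrix has rank at least the sum of the ranks of its diagonal blocks.
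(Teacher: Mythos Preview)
Your proposal is correct and follows essentially the same approach as the paper: both use the vector-space splitting $R/K = (R/I) \oplus m(R/J)$ to realize $\times f$ as a block lower-triangular matrix whose diagonal blocks are the multiplication maps on $R/I$ and $R/J$, and then use the Hilbert-function hypotheses to rule out the situation where one diagonal block is strictly wide while the other is strictly tall. The paper's write-up is more concise (it stops at ``$M$ has maximal rank if the two diagonal blocks have maximal ranks''), while you spell out the min-arithmetic explicitly, but the content is the same.
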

\begin{proof}
	By \Cref{lemma:glue} we have $R/K=R/I \oplus m(R/J)$, considered as vector spaces. With this decomposition of $R/K$ we can represent the multiplication map $\cdot f: [R/K]_i \to [R/K]_{i+d}$ by a block matrix
	\[
	M=\begin{pmatrix}
	\cdot f :[R/I]_i \to [R/I]_{i+d} & 0 \\
	* & \cdot f : [R/J]_{i-\deg(m)} \to [R/J]_{i-\deg(m)+d}
	\end{pmatrix}
	\]
	with the convention $[R/J]_j=\{0\}$ if $j<0$. The diagonal blocks correspond to multiplication by $f$ in $R/I$ and $R/J$, and it is a lower triangular block matrix since $f(m(R/J)) \subseteq m(R/J)$. By the condition given on the Hilbert functions it can not happen that one of the diagonal blocks has strictly more rows than columns while the other block has strictly fewer rows than columns. Hence $M$ has maximal rank if the two diagonal blocks have maximal ranks. This is the same as saying that multiplication by $f$ has maximal rank in every degree in $R/K$ if it does so in both $R/I$ and $R/J$. 		
\end{proof}

From now on we will only consider Artinian algebras. For an Artinian algebra $A=\bigoplus_i A_i$, the largest $s$ such that $A_s\not=0$, is called the \emph{maximal socle degree} of $A$. If $\HF(A,i)=\HF(A,s-i)$ for all $i\in \{0,\ldots, s\}$, we say that $A$ has a symmetric Hilbert series. We say that 
$A$ has the \emph{SLP in the narrow sense} if $A$ has the SLP and its Hilbert series is symmetric.

\begin{corollary}\label{cor:symmetricHS} 
Let $K$ be an Artinian monomial ideal and let $m$ be a monomial. Let $I=K+(m)$ and $J=K:(m)$, and note that these ideals are Artinian as well. Assume that
\begin{enumerate}
\item $R/I$ has a symmetric Hilbert series and the maximal socle degree $r$,
\item $R/J$ has a symmetric Hilbert series and the maximal socle degree $s$ such that $r-s=2\deg(m)$.
\end{enumerate}
Then $R/K$ has a symmetric Hilbert series and the maximal socle degree $r$. Moreover, if $R/I$ and $R/J$ both have the WLP (SLP) then so does $R/K$. 
\end{corollary}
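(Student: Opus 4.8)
The plan is to extract the Hilbert series of $R/K$ from \Cref{lemma:glue} and then verify that the Hilbert-function hypotheses of \Cref{thm:glue_maxrank} hold for $d=1$ (for the WLP) and for all $d\ge 1$ (for the SLP); once this is done the Lefschetz conclusions are immediate from that theorem. Throughout I write $e=\deg(m)$, and I may assume $R/I$ and $R/J$ are both nonzero (so in particular $e\ge 1$), since otherwise hypothesis $(1)$ or $(2)$ would have no content.

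First I would treat the Hilbert series. By \Cref{lemma:glue}, $\HF(R/K,i)=\HF(R/I,i)+\HF(R/J,i-e)$ for every $i$, with $\HF(R/J,j)=0$ for $j<0$. Substituting, and using the symmetries $\HF(R/I,j)=\HF(R/I,r-j)$ and $\HF(R/J,j)=\HF(R/J,s-j)$ together with $r=s+2e$, one gets $\HF(R/I,r-i)=\HF(R/I,i)$ and $\HF(R/J,r-i-e)=\HF(R/J,s-(i-e))=\HF(R/J,i-e)$, the boundary cases $i<e$ and $i>s+e$ being handled by the zero convention, so that $\HF(R/K,r-i)=\HF(R/K,i)$. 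For the socle degree, $\HF(R/I,i)=0$ for $i>r$ and $\HF(R/J,i-e)=0$ for $i>s+e=r-e$, hence $\HF(R/K,i)=0$ for $i>r$, while $\HF(R/K,r)\ge \HF(R/I,r)>0$; thus the maximal socle degree of $R/K$ is $r$.

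For the Lefschetz part the main work is checking the two implications in \Cref{thm:glue_maxrank}. The key fact is that an Artinian algebra with the WLP has unimodal Hilbert function; combined with symmetry, $\HF(R/I,\cdot)$ is unimodal and symmetric about $r/2$, and $\HF(R/J,\cdot)$ is unimodal and symmetric about $s/2$, so the shifted function $i\mapsto\HF(R/J,i-e)$ is unimodal and symmetric about $s/2+e=r/2$ --- the same center as $\HF(R/I,\cdot)$. I would then record the elementary observation that for a function $g$ which is non-decreasing up to $c$, non-increasing from $c$ on, and symmetric about $c$, one has $g(p)\le g(p+d)$ whenever $2p+d\le 2c$ and $g(p)\ge g(p+d)$ whenever $2p+d\ge 2c$ (proved by splitting into the cases $p+d\le c$ and $p+d>c$, using the symmetry in the latter). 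Applying this to $\HF(R/I,\cdot)$ with $c=r/2$: if $\HF(R/I,i)<\HF(R/I,i+d)$ then necessarily $2i+d<r$, and applying it to the shifted function of $R/J$, also centered at $r/2$, yields $\HF(R/J,i-e)\le\HF(R/J,i-e+d)$; the case $\HF(R/I,i)>\HF(R/I,i+d)$ forces $2i+d>r$ and gives the reverse inequality in the same way. This is exactly what \Cref{thm:glue_maxrank} requires, so $R/K$ has the WLP, respectively the SLP, whenever $R/I$ and $R/J$ do.

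I expect the only real obstacle to be this middle step: converting ``$\HF(R/I,\cdot)$ strictly increases, resp.\ decreases, over a gap of length $d$'' into the statement ``the midpoint of that gap lies to the left, resp.\ right, of the symmetry center'', and then observing that the very same positional statement applies to the shifted Hilbert function of $R/J$. Unimodality is what makes this conversion legitimate --- a merely symmetric Hilbert function could keep increasing past its center --- and the hypothesis $r-s=2\deg(m)$ is what aligns the two symmetry centers after the shift. The remaining arguments are routine manipulation of \Cref{lemma:glue} and of the zero convention near the ends of the Hilbert function.
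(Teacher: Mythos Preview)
Your proposal is correct and follows essentially the same route as the paper: use \Cref{lemma:glue} for the Hilbert series, note that unimodality (from the WLP) together with symmetry forces the midpoint $2i+d$ to lie on the correct side of $r$, and that $r-s=2\deg(m)$ makes the shifted Hilbert function of $R/J$ share the same center, so the hypotheses of \Cref{thm:glue_maxrank} are verified. The only difference is cosmetic: you spell out the ``elementary observation'' about unimodal symmetric functions explicitly, whereas the paper leaves this implicit.
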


\begin{proof} 
As the series $t^{\deg(m)}\HS(R/J,t)$ is symmetric around $s/2+\deg(m)=r/2$, it follows from \Cref{lemma:glue} that the Hilbert series of $R/K$ is symmetric. Since $\HS(R/K,t)=t^{\deg(m)}\HS(R/J,t)+\HS(R/I,t)$, the maximal socle degree equals $\max(r,s+\deg(m))=r$.

Now assume that $R/I$ and $R/J$ both have the WLP (SLP). Then they both have unimodal Hilbert series. Since $R/I$ has a symmetric Hilbert series and the maximal socle degree $r$, the inequality $\HF(R/I,i)<\HF(R/I,i+d)$ implies that $2i+d<r$. As $r=s+2\deg(m)$, this is equivalent to $2(i-\deg(m))+d<s$. This, in turn, implies $\HF(R/J,i-\deg(m)) \le \HF(R/J,i-\deg(m)+d)$, since $R/J$ has symmetric Hilbert series and the maximal socle degree $s$. In the same way $\HF(R/I,i)>\HF(R/I,i+d)$ implies $2i+d>r$, which then implies $\HF(R/J,i-\deg(m)) \ge \HF(R/J,i-\deg(m)+d)$. It now follows from \Cref{thm:glue_maxrank}, with $f$ being the sum of the variables, that $R/K$ has the WLP (SLP).
\end{proof}

When $K$ is glued symmetrically from $I$ and $J$ w.\,r.\,t.\ $m$ as in \Cref{cor:symmetricHS} we say that the gluing is \emph{centre-to-centre}.

\begin{ex}
\label{ex:1}
	In \cite{productlinear} it is proved that algebras in a certain class of complete intersections generated by products of linear forms have the SLP. The key to the result is that one can determine the associated 
	initial ideals, and prove that these monomial ideals have the SLP. The initial ideals are of the form
	\[K=(x_1^{d_1+1},\ldots, x_{n-1}^{d_{n-1}+1}, x_1x_n^{d_0}, x_2x_n^{d_0+d_1}, \ldots, x_nx_n^{d_0+\dots +d_{n-1}}),\]
	where $d_0, d_1, \ldots, d_{n-1}$ are positive integers.
	We will now give a proof of the fact that this ideal defines an algebra with the SLP in the narrow sense and the maximal socle degree $d_{0}+ \cdots + d_{n-1}$, using \Cref{cor:symmetricHS}.
	
	The proof is by induction over $n$. For $n=1$ the statement is true because all algebras in one variable have the SLP in the narrow sense.
	
	Let $I=K+(x_1)$ and 
	\[J=K:(x_1)=  (x_1^{d_1},x_2^{d_2+1},\ldots,x_{n-1}^{d_{n-1}+1},x_n^{d_0}). \]
Notice that 
\[
\frac{R}{I}\cong \frac{\Bbbk[x_2, \ldots, x_n]}{(x_2^{d_2+1},\ldots, x_{n-1}^{d_{n-1}+1}, x_2x_n^{d_0+d_1}, \ldots, x_nx_n^{d_0+\dots +d_{n-1}})}
\]	
	and the latter algebra
	 has the SLP in the narrow sense with maximal socle degree $r=d_0+ \cdots + d_{n-1}$, by the inductive assumption. Also $R/J$ has the SLP in the narrow sense as it is a monomial complete intersection \cite{stanley}. The maximal socle degree of $R/J$ is $s=d_{0}+ \cdots + d_{n-1} -2$. As $r-s=2$ we can use \Cref{cor:symmetricHS} with $m=x_1$, and it follows that $R/K$ has the SLP in the narrow sense.  
	
\end{ex}

\begin{ex}
In \cite{powers} it is proved that $\Bbbk[x_1,\ldots,x_n]/(x_1^2,\ldots,x_n^2)^2$ has the WLP when $n$ is odd. We will sketch the proof in terms of \Cref{thm:glue_maxrank} and will do so by defining a sequence of algebras and then arguing that they all have the WLP. Let $K_0 := (x_1^2,\ldots,x_n^2)^2$. 
Let $K_1:= K_0 + (x_1^2) = x_1^2  + (x_2^2,\ldots,x_n^2)^2$ and let
$J_1 := K_0:(x_1^2) = (x_1^2, \ldots,x_n^2)$.
Next, let 
$K_2 := K_1  + (x_2^2)  = (x_1^2) + (x_2^2) + (x_3^2,\ldots,x_n^2)^2$ 
and 
$J_2 :=  K_1 : (x_2^2) = (x_1^2, \ldots,x_n^2).$ Continue this way to define ideals $K_1,\ldots,K_{n-1}, J_1,\ldots,J_{n-1}$, with $K_i = K_{i-1} + (x_i^2)$ and $J_i :=  K_{i-1} : (x_i^2)$. In particular, 
$K_{n-1} = (x_1^2,\ldots,x_{n-1}^2,x_n^4)$ and $J_1= \cdots = J_{n-1} = (x_1^2,\ldots,x_n^2)$, so being monomial complete intersections, $K_{n-1}$ and $J_1, \ldots, J_{n-1}$ have the SLP. By \Cref{lemma:glue}, we have that $K_{i-1}^c = K_{i}^c + x_{i}^2 J_{i}^c$ for $i = 1, \ldots, n-1$. One can check that 
$K_{i-1}, (x_i^2), K_i, J_i$ fulfill the requirement in \Cref{thm:glue_maxrank} for $i =n-1, \ldots, 1$, with $d=1$ and $n$ odd, which involves some work with binomial coefficients, see \cite{powers}. This gives that $K_{n-2}, \ldots, K_0$ define algebras with the WLP. 

Remark: The method of proof does not apply for the SLP, which is open. Neither does the method apply for the WLP and the case when $n$ is even, but it is
expected that $\Bbbk[x_1,\ldots,x_n]/(x_1^2,\ldots,x_n^2)^2$ has the WLP also when $n$ is even.
\end{ex}

\section{A class of Gorenstein algebras with the SLP}
\label{sec:gor}
In this section we use \Cref{cor:symmetricHS} to detect a new class of Gorenstein algebras with the SLP. This generalizes results in \cite{gorensteinnumerical} and \cite{gorensteinglue} concerning Artinian Gorenstein algebras in codimension three, see \Cref{corollary:gorenst_numerical} and \Cref{rmk:gorenst_glue}. 

The first step is to prove that the quotient by the initial ideal, under a certain monomial ordering, has the SLP.

\begin{lemma}\label{lemma:gorenst_init}
Let $d_1, \ldots, d_n$ be nonnegative  integers, and let $\alpha_1, \ldots, \alpha_n$ be integers such that $0 \le \alpha_i \le d_i$ and $d_1=\alpha_2 + \dots + \alpha_n$. Define the monomial ideal 
	\[	K=(x_1^{d_1}, \ldots, x_n^{d_n}) + x_1^{d_1-\alpha_1}(x_2^{d_2-\alpha_2}, \ldots, x_n^{d_n-\alpha_n}).\]
	Then $R/K$ has the SLP in the narrow sense, with socle degree 
	 $d_1+ \dots + d_n-\alpha_1-n$, unless $K=R$.
\end{lemma}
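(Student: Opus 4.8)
## Proof Proposal

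The plan is to prove the statement by induction on $n$, using the decomposition machinery of \Cref{cor:symmetricHS} with the monomial $m = x_1^{d_1 - \alpha_1}$. First I would dispose of the degenerate cases: if some $d_i = 0$ then $K = R$ and there is nothing to prove; and the base case $n = 1$ holds since every Artinian algebra in one variable has the SLP in the narrow sense (here $d_1 = 0$ forces $\alpha_1 = 0$, so $K = (x_1^{d_1})$ is the whole ring unless $d_1 \geq 1$, and then $R/K$ has socle degree $d_1 - 1$, matching the claimed formula $d_1 - \alpha_1 - 1$). Actually the cleaner base case is $n = 2$: the condition $d_1 = \alpha_2$ together with $\alpha_2 \leq d_2$ forces things to collapse nicely, and $K$ becomes (up to the extra generator) close to a complete intersection, so the SLP follows from Stanley's theorem on monomial complete intersections.

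For the inductive step, set $m = x_1^{d_1 - \alpha_1}$ and form $I = K + (m)$ and $J = K : (m)$. The key computations are: $I = (x_1^{d_1 - \alpha_1}, x_2^{d_2}, \ldots, x_n^{d_n})$, which is a monomial complete intersection in $n$ variables and hence has the SLP in the narrow sense with socle degree $r = (d_1 - \alpha_1) + d_2 + \cdots + d_n - n$; and $J = (x_1^{\alpha_1}, x_2^{d_2 - \alpha_2}, \ldots, x_n^{d_n - \alpha_n})$ — here I should double-check the colon computation, since $x_1^{d_1} = x_1^{d_1 - \alpha_1} \cdot x_1^{\alpha_1}$ contributes $x_1^{\alpha_1}$, while each $x_i^{d_i}$ for $i \geq 2$ is already a multiple of $x_i^{d_i - \alpha_i}$ so it gets absorbed, leaving $J = (x_1^{\alpha_1}, x_2^{d_2 - \alpha_2}, \ldots, x_n^{d_n - \alpha_n})$, again a monomial complete intersection, with socle degree $s = \alpha_1 + (d_2 - \alpha_2) + \cdots + (d_n - \alpha_n) - n$. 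Both $R/I$ and $R/J$ have the SLP in the narrow sense by Stanley. I would then verify the numerical hypothesis of \Cref{cor:symmetricHS}, namely $r - s = 2\deg(m) = 2(d_1 - \alpha_1)$: indeed $r - s = (d_1 - \alpha_1) - \alpha_1 + (\alpha_2 + \cdots + \alpha_n) = d_1 - 2\alpha_1 + d_1 = 2(d_1 - \alpha_1)$, using $d_1 = \alpha_2 + \cdots + \alpha_n$. The conclusion of \Cref{cor:symmetricHS} then gives that $R/K$ has a symmetric Hilbert series and that it has the SLP (in fact in the narrow sense), with socle degree $r = (d_1 - \alpha_1) + d_2 + \cdots + d_n - n = d_1 + \cdots + d_n - \alpha_1 - n$, as claimed.

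I realize the induction on $n$ may actually be unnecessary here: since both $I$ and $J$ turn out to be genuine monomial complete intersections, one can invoke Stanley's theorem directly for the diagonal blocks and apply \Cref{cor:symmetricHS} once. So the cleanest structure is: (1) handle $K = R$; (2) compute $I$ and $J$ explicitly and observe both are monomial complete intersections; (3) cite Stanley for their SLP in the narrow sense; (4) check $r - s = 2\deg(m)$; (5) apply \Cref{cor:symmetricHS}. The main obstacle — really the only place where care is needed — is the colon ideal computation $J = K : (m)$, since one must track how the extra generators $x_1^{d_1 - \alpha_1} x_i^{d_i - \alpha_i}$ behave under colon by $x_1^{d_1 - \alpha_1}$ (they become $x_i^{d_i - \alpha_i}$) and confirm that the power-of-$x_1$ generator is exactly $x_1^{\alpha_1}$ and not something smaller, which requires $\alpha_1 \le d_1$ and a short argument that no monomial of lower $x_1$-degree is forced into $K : (m)$. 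Everything else is bookkeeping with the socle-degree formula and the identity $d_1 = \alpha_2 + \cdots + \alpha_n$.
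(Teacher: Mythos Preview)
Your final approach (the five-step plan) is exactly what the paper does: take $m=x_1^{d_1-\alpha_1}$, compute $I=(x_1^{d_1-\alpha_1},x_2^{d_2},\ldots,x_n^{d_n})$ and $J=(x_1^{\alpha_1},x_2^{d_2-\alpha_2},\ldots,x_n^{d_n-\alpha_n})$ as monomial complete intersections, invoke Stanley, and verify $r-s=2(d_1-\alpha_1)$ via the identity $d_1=\alpha_2+\cdots+\alpha_n$ so that \Cref{cor:symmetricHS} applies. No induction on $n$ is needed, as you correctly note.

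There is, however, a genuine boundary issue you skip: \Cref{cor:symmetricHS} requires both $R/I$ and $R/J$ to have a well-defined maximal socle degree, but your $I$ and $J$ can degenerate to $R$ even when $K\ne R$. If $\alpha_1=d_1$ then $m=1$ and $I=R$; if $\alpha_1=0$ then $m=x_1^{d_1}\in K$ so $J=K:(m)=R$; and if $\alpha_i=d_i$ for some $i\ge 2$ then $x_i^{d_i-\alpha_i}=1$ forces $J=R$. The paper disposes of these cases first, observing that in each of them $K$ is already a monomial complete intersection (or $K=R$), so Stanley applies directly with the correct socle degree. Only after excluding these does the decomposition argument go through cleanly. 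Your step ``handle $K=R$'' does not cover these, since in all of them one can have $K\ne R$.
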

\begin{proof}
Assume $K\not=R$. In particular, all $d_i$ are positive. We start by considering the following special cases.
\begin{itemize}
\item $\alpha_1=d_1$ and $\alpha_i=d_i$ for some $i>1$. In this case $K=R$.
\item $\alpha_1<d_1$, $\alpha_i=d_i$ for some $i>1$. In this case $K=(x_1^{d_1-\alpha_1},x_2^{d_2},\ldots, x_n^{d_n})$. Then $R/K$ is a monomial complete intersection, which always have the SLP, and the socle degree is $d_1+\cdots+d_n-\alpha_1-n$, as desired.
\item $\alpha_1=0$. In this case $K=(x_1^{d_1},\ldots, x_n^{d_n})$, so $R/K$ is a monomial complete intersection with socle degree $d_1+\cdots+d_n-n$. This is the desired socle degree as $\alpha_1=0$. 
\item $\alpha_1=d_1$ and $\alpha_i<d_i$ for all $i>1$. Here $K=(x_1^{d_1}, x_2^{d_2-\alpha_2}\ldots, x_n^{d_n-\alpha_n})$, so $R/K$ has the SLP and the socle degree is 
$$d_1+\cdots+d_n-\alpha_2-\cdots-\alpha_n-n=d_1+\cdots+d_n-d_1-n=d_1+\cdots+d_n-\alpha_1-n.$$ 
\end{itemize}
Assume none of the above holds. Then take $m=x_1^{d_1-\alpha_1}$ and let $I=K+(m) = (x_1^{d_1-\alpha_1}, x_2^{d_2}, \ldots, x_n^{d_n})$ 
and $J=K:(m) = (x_1^{\alpha_1}, x_2^{d_2-\alpha_2}, \ldots, x_n^{d_n-\alpha_n})$.
Both $I$ and $J$ are different from $R$. In order to apply \Cref{cor:symmetricHS}, we need to show that 
$$
(d_1+\cdots+d_n-\alpha_1-n)-(\alpha_1+d_2+\cdots+d_n-\alpha_2-\cdots-\alpha_n-n)=2(d_1-\alpha_1). 
$$
This simplifies to $d_1=\alpha_2+\cdots+\alpha_n$, which is true by assumption.
\end{proof}

Next we consider a class of ideals $K'$ such that $\ini_\prec(K')$, w.\,r.\,t.\ certain monomials orders, is the ideal $K$ from Lemma \ref{lemma:gorenst_init}. We will also consider a generalized version of the pairs $K, K'$ in Theorem \ref{thm:table_binomial}. 

\begin{theorem}\label{thm:gorenstein_slp}
Let $d_1, \ldots, d_n$ be nonnegative integers, and let $\alpha_1, \ldots, \alpha_n$ be integers such that $0 \leq \alpha_i \leq d_i$ and $d_1=\alpha_2 + \dots + \alpha_n$. Define the ideal 
	\[	K'=(x_1^{d_1}+cx_2^{\alpha_2}\cdots x_n^{\alpha_n}, x_2^{d_2}, \ldots, x_n^{d_n}) + x_1^{d_1-\alpha_1}(x_2^{d_2-\alpha_2}, \ldots, x_n^{d_n-\alpha_n}),\]
	where $c$ is any nonzero constant. Then $R/K'$ is Gorenstein and has the SLP, unless $K' = R$.
\end{theorem}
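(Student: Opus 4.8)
The plan is to show that $K'$ is a Gröbner degeneration of $K$ from \Cref{lemma:gorenst_init}, apply Wiebe's theorem to transport the SLP, and separately verify the Gorenstein property. Concretely, I would choose a monomial order $\prec$ — a weighted order or a suitable lexicographic order with $x_1$ largest — for which the leading term of $x_1^{d_1} + c\,x_2^{\alpha_2}\cdots x_n^{\alpha_n}$ is $x_1^{d_1}$ (this holds for a lex order with $x_1 \succ x_2 \succ \dots \succ x_n$, since $x_1^{d_1} \succ x_2^{\alpha_2}\cdots x_n^{\alpha_n}$ as $d_1 \ge 1$). The first key step is to prove that the given generators of $K'$ already form a Gröbner basis with respect to $\prec$, so that $\ini_\prec(K') = K$. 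The second step invokes \Cref{lemma:gorenst_init}: $R/K$ has the SLP (in the narrow sense), hence by Wiebe's result $R/K'$ has the SLP. The third step is to show $R/K'$ is Gorenstein; here I would argue that $R/K'$ is a quotient of the complete intersection $R/(x_1^{d_1}+c\,x_2^{\alpha_2}\cdots x_n^{\alpha_n}, x_2^{d_2},\dots,x_n^{d_n})$ by a single element (the image of $x_1^{d_1-\alpha_1}$, since $x_1^{d_1-\alpha_1}(x_2^{d_2-\alpha_2},\dots,x_n^{d_n-\alpha_n})$ is a principal ideal modulo the $x_i^{d_i}$), and that this is a link/residual construction producing an Artinian Gorenstein algebra; alternatively, since $R/K'$ is a flat degeneration of $R/K$ which has symmetric Hilbert series, and $R/K'$ has an explicitly checkable $1$-dimensional socle, Gorensteinness follows.

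For the Gröbner basis claim, the natural tool is Buchberger's criterion: I would compute the $S$-polynomials of all pairs of generators and show each reduces to zero. The pairs among the pure powers $x_2^{d_2},\dots,x_n^{d_n}$ and the monomial generators $x_1^{d_1-\alpha_1}x_2^{d_2-\alpha_2},\dots,x_1^{d_1-\alpha_1}x_n^{d_n-\alpha_n}$ are monomial, so their $S$-polynomials reduce to zero automatically (coprime-leading-term or standard monomial reduction). The only interesting $S$-polynomials involve the binomial $g := x_1^{d_1}+c\,x_2^{\alpha_2}\cdots x_n^{\alpha_n}$ paired with $x_i^{d_i}$ ($i>1$) or with $x_1^{d_1-\alpha_1}x_i^{d_i-\alpha_i}$. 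For $S(g, x_i^{d_i})$ the tail term $c\,x_2^{\alpha_2}\cdots x_n^{\alpha_n} \cdot x_i^{d_i}$ appears, and using $\alpha_i \le d_i$ this is divisible by $x_i^{d_i}$, so it reduces. For $S(g, x_1^{d_1-\alpha_1}x_i^{d_i-\alpha_i})$ the tail becomes (up to constant) $x_1^{d_1-\alpha_1}x_2^{\alpha_2}\cdots x_n^{\alpha_n}\cdot x_i^{-\alpha_i}\cdot x_i^{?}$ — more carefully, $\mathrm{lcm}(x_1^{d_1}, x_1^{d_1-\alpha_1}x_i^{d_i-\alpha_i}) = x_1^{d_1}x_i^{d_i-\alpha_i}$, so the tail is $c\, x_2^{\alpha_2}\cdots x_n^{\alpha_n} x_i^{d_i-\alpha_i} = c\,x_2^{\alpha_2}\cdots x_i^{d_i}\cdots x_n^{\alpha_n}$, which is divisible by $x_i^{d_i}$ and reduces to zero. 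So Buchberger's criterion is satisfied.

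I expect the main obstacle to be the Gorenstein verification, since it is the one part not handed to us by an earlier result. The degeneration argument gives the SLP and the symmetric Hilbert series for free, but symmetry of the Hilbert series does not by itself imply Gorenstein. The cleanest route is probably to exhibit $R/K'$ as the quotient of the Artinian complete intersection $C := R/(x_1^{d_1}+c\,x_2^{\alpha_2}\cdots x_n^{\alpha_n}, x_2^{d_2},\dots,x_n^{d_n})$ by the principal ideal generated by the image $\bar{u}$ of $u := x_1^{d_1-\alpha_1}$: indeed modulo $(x_2^{d_2},\dots,x_n^{d_n})$ the ideal $x_1^{d_1-\alpha_1}(x_2^{d_2-\alpha_2},\dots,x_n^{d_n-\alpha_n})$ is generated by the single element $x_1^{d_1-\alpha_1}x_2^{d_2-\alpha_2}\cdots$ — more precisely one checks in $C$ that all the generators $x_1^{d_1-\alpha_1}x_i^{d_i-\alpha_i}$ become scalar multiples of one fixed element because $x_1^{d_1}\equiv -c\,x_2^{\alpha_2}\cdots x_n^{\alpha_n}$ lets us rewrite $x_1^{d_1-\alpha_1}x_i^{d_i-\alpha_i}$ in terms of a common monomial. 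Then $R/K' = C/(\bar u)$, and since $C$ is Artinian Gorenstein, the quotient $C/\mathrm{ann}_C(0 : \bar u)$ — equivalently $C/(\bar u)$ is Gorenstein precisely when $(\bar u) = (0 : (0:\bar u))$, which one checks using that $C$ is a complete intersection and $\bar u$ is, up to the Macaulay inverse system, dual to a power of $x_1$. In the write-up I would make this link explicit via the inverse system: the complete intersection $C$ has socle generator (dual) $\prod x_i^{d_i-1}$ adjusted by the binomial relation, and quotienting by $\bar u = \bar x_1^{d_1-\alpha_1}$ corresponds to taking the sub-inverse-system generated by $x_1^{d_1-\alpha_1}\cdot(\text{socle generator})$, which is again cyclic — hence $R/K'$ is Gorenstein. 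If packaging the inverse-system computation proves cumbersome, the fallback is to compute the socle of $R/K'$ directly from the Gröbner basis and the known Hilbert function, showing it is one-dimensional in degree $d_1+\dots+d_n-\alpha_1-n$.
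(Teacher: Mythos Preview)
Your approach to the SLP---verify via Buchberger that the given generators form a Gr\"obner basis with respect to a lex order with $x_1$ largest, identify the initial ideal as the ideal $K$ of \Cref{lemma:gorenst_init}, and apply Wiebe's theorem---is correct and is exactly what the paper does.

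The Gorenstein argument, however, has a genuine gap. Your main claim is that the image of $x_1^{d_1-\alpha_1}(x_2^{d_2-\alpha_2},\dots,x_n^{d_n-\alpha_n})$ in the complete intersection $C=R/\mathfrak{a}$ is the principal ideal generated by $\bar x_1^{\,d_1-\alpha_1}$, so that $R/K'=C/(\bar x_1^{\,d_1-\alpha_1})$. This is false in general: $x_1^{d_1-\alpha_1}\notin K'$. For instance with $n=3$, $d_1=d_2=d_3=3$, $\alpha_1=\alpha_2=2$, $\alpha_3=1$, one has $K'=(x_1^3+cx_2^2x_3,\,x_2^3,\,x_3^3,\,x_1x_2,\,x_1x_3^2)$, and $x_1\notin K'$ since $x_1$ does not lie in the initial ideal. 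The relation $x_1^{d_1}\equiv -c\,x_2^{\alpha_2}\cdots x_n^{\alpha_n}$ does not make the elements $x_1^{d_1-\alpha_1}x_i^{d_i-\alpha_i}$ scalar multiples of one another in $C$; what it shows is only that each of them is annihilated by $x_1^{\alpha_1}$. Your inverse-system sketch repeats the confusion: quotienting $C$ by an element $\bar u$ corresponds to the sub-inverse-system \emph{annihilated} by $u$, not to the cyclic sub-system generated by $u$ acting on the dual socle generator. It is the colon $\mathfrak{a}:(f)$, not the sum $\mathfrak{a}+(u)$, that corresponds to the cyclic inverse system with dual generator $f\circ F$.

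The paper takes precisely this colon route: it proves $K'=\mathfrak{a}:(x_1^{\alpha_1})$ by an explicit reduction argument in $C$, tracking how the condition $p\,x_1^{\alpha_1}\in\mathfrak{a}$ forces each term of the normal form of $p$ into $x_1^{d_1-\alpha_1}(x_2^{d_2-\alpha_2},\dots,x_n^{d_n-\alpha_n})$. Since $\mathfrak{a}$ is a complete intersection, hence Gorenstein, the colon $\mathfrak{a}:(x_1^{\alpha_1})$ is again the apolar ideal of a single form, so $R/K'$ is Gorenstein with no socle computation needed. Your fallback of computing the socle directly could be made to work, but the colon identification is both shorter and more conceptual, and it is the missing idea in your proposal.
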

\begin{proof}
Let
	\[	\mathfrak{a}=(x_1^{d_1}+cx_2^{\alpha_2}\cdots x_n^{\alpha_n}, x_2^{d_2}, \ldots, x_n^{d_n}).\]
	As a first step we shall prove that $K'=\mathfrak{a}:(x_1^{\alpha_1})$. In other words we want to determine for which polynomials $p$ it holds that $px_1^{\alpha_1} \in \mathfrak{a}$. Let $\prec$ be any monomial order with $x_1 \succ x_i$ for all $i\ge 2$. The given generators of $\mathfrak{a}$ are a Gröbner basis w.\,r.\,t.\ $\prec$, since the leading terms are pure powers of the variables. 	 Therefore one way of proving that an element belongs to $\mathfrak{a}$ is to prove that it reduces to zero by the division algorithm on the generators of $\mathfrak{a}.$ In this case the reduction algorithm can be described as substituting $x_1^{d_1} \mapsto -cx_2^{\alpha_2} \cdots x_n^{\alpha_n}$, and removing any term divisible by $x_i^{d_i}$ for $i \ge 2$. Let $\bar{p}$ be the remainder of $p$ after applying this reduction algorithm. Then the exponent of $x_i$ in any term of $\bar{p}$ is less than $d_i$. To determine when $\bar{p}x_1^{\alpha_1}$ belongs to $\mathfrak{a}$ we shall apply the reduction once more to $\bar{p}x_1^{\alpha_1}$. We claim that no two terms in the support of $\bar{p}x_1^{\alpha_1}$ can be mapped to constant multiples of the same monomial by the substitution $x_1^{d_1} \mapsto -cx_2^{\alpha_2} \cdots x_n^{\alpha_n}$. We can ignore coefficients when proving this claim. Suppose $x_1^{\alpha_1+\beta_1}x_2^{\beta_2} \cdots x_n^{\beta_n}$ and $x_1^{\alpha_1+\gamma_1}x_2^{\gamma_2} \cdots x_n^{\gamma_n}$, with $0 \le \gamma_i, \beta_i <d_i$ for each $i$, were mapped to the same monomial. That is, we would have 
	\[
	x_1^{\alpha_1+\beta_1-kd_1}x_2^{\beta_2+k\alpha_2} \cdots x_n^{\beta_n+k\alpha_n} = x_1^{\alpha_1+\gamma_1-k'd_1}x_2^{\gamma_2+k'\alpha_2} \cdots x_n^{\gamma_n+k'\alpha_n}.
	\]
	Comparing the exponents of $x_1$ we see that $|\beta_1-\gamma_1|=|k-k'|d_1$. But $|\beta_1-\gamma_1|<d_1$ so the only option is $k=k'$ and $\beta_1=\gamma_1$. Comparing the exponents of the other variables results in $\beta_i=\gamma_i$ for all $i$, meaning that the two monomials were in fact equal from the beginning. This means that there is no cancellation of terms when we apply the substitution to $\bar{p}x_1^{\alpha_1}$. So for $\bar{p}x_1^{\alpha_1}$ to reduce to zero we need each term to be mapped to a term divisible by $x_i^{d_i}$ for some $i\ge2$. This requires each term of $\bar{p}x_1^{\alpha_1}$ to be divisible by $x_1^{d_1}$ and $x_i^{d_i-\alpha_i}$ for some $i \ge 2$. In other words $\bar{p}x_1^{\alpha_1}$ belongs to the ideal $x_1^{d_1}(x_2^{d_2-\alpha_2}, \ldots, x_n^{d_n-\alpha_n})$. Then $\bar{p} \in x_1^{d_1-\alpha_1}(x_2^{d_2-\alpha_2}, \ldots, x_n^{d_n-\alpha_n})$ and finally $p \in \mathfrak{a} + x_1^{d_1-\alpha_1}(x_2^{d_2-\alpha_2}, \ldots, x_n^{d_n-\alpha_n})=K'$. This proves that $K'=\mathfrak{a}:(x_1^{\alpha_1})$.
	
	The ideal $\mathfrak{a}$ is a complete intersection, so it follows that $R/K'$ is Gorenstein. As $K'$ is generated by monomials and one binomial it is straightforward to verify that all S-polynomials reduce to zero, w.\,r.\,t.\ $\prec$, and therefore the given generating set is a Gröbner basis. Then
	\[	\ini_\prec(K')=(x_1^{d_1}, \ldots, x_n^{d_n}) + x_1^{d_1-\alpha_1}(x_2^{d_2-\alpha_2}, \ldots, x_n^{d_n-\alpha_n}),\]
	so we know from \Cref{lemma:gorenst_init} that $R/\ini_\prec(K')$ has the SLP. It follows that $R/K'$ has the SLP as well, using Wiebe's result \cite{ini}. 
\end{proof}


We can now present a substantial generalization of the main results 
in \cite{gorensteinnumerical, gorensteinnumerical2} on the WLP for some specific classes of codimension three Gorenstein algebras arising from the Ap\'ery set of a numerical semigroup. By \cite[Theorem 5.6]{guerrieri}, all such algebras are of the form described in \Cref{thm:gorenstein_slp}, with $n=3$ and $c=-1$.

\begin{corollary}\label{corollary:gorenst_numerical}
Codimension three Gorenstein algebras arising from the Ap\'ery set of a numerical semigroup have the SLP.
\end{corollary}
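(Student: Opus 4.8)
The plan is to deduce \Cref{corollary:gorenst_numerical} directly from \Cref{thm:gorenstein_slp} together with the structural description of these algebras provided by Guerrieri. First I would recall precisely which algebras are meant: given a numerical semigroup $\langle a,b,c\rangle$ that is three-generated and symmetric (equivalently, the associated semigroup ring is Gorenstein of codimension three), one forms the Artinian reduction of the semigroup ring determined by the Ap\'ery set, and this is the graded Artinian Gorenstein algebra in question. The key input is \cite[Theorem 5.6]{guerrieri}, which gives an explicit presentation of such an algebra: it is of the form $R/K'$ with $n=3$, where $K'$ is exactly the ideal appearing in \Cref{thm:gorenstein_slp} with the constant $c=-1$, and with the exponents $d_1,d_2,d_3,\alpha_1,\alpha_2,\alpha_3$ satisfying $0\le\alpha_i\le d_i$ and the balancing relation $d_1=\alpha_2+\alpha_3$.

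Once this identification is in place, the argument is immediate: \Cref{thm:gorenstein_slp} asserts that for any nonzero constant $c$, and in particular $c=-1$, the algebra $R/K'$ is Gorenstein and has the SLP, provided $K'\neq R$. Since the algebras coming from numerical semigroups are genuine Artinian algebras (they have nonzero socle and are not the zero ring), the exceptional case $K'=R$ does not occur, so the SLP conclusion applies without qualification. Thus every codimension three Gorenstein algebra arising from the Ap\'ery set of a numerical semigroup has the SLP.

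I would therefore write the proof as essentially two sentences: cite \cite[Theorem 5.6]{guerrieri} to put the algebra in the form of \Cref{thm:gorenstein_slp} with $n=3$ and $c=-1$, check that the hypotheses ($0\le\alpha_i\le d_i$, $d_1=\alpha_2+\alpha_3$) are exactly those recorded by Guerrieri, note that $K'\neq R$ since the algebra is a nonzero Artinian ring, and invoke \Cref{thm:gorenstein_slp}. The only point requiring genuine care — and the main (mild) obstacle — is verifying that Guerrieri's presentation really does match the shape of $K'$ after possibly relabeling variables or normalizing generators, in particular that the single binomial generator takes the form $x_1^{d_1}+cx_2^{\alpha_2}x_3^{\alpha_3}$ with the exponent on $x_2^{\alpha_2}x_3^{\alpha_3}$ tied to the exponents $d_1-\alpha_1$, $d_2-\alpha_2$, $d_3-\alpha_3$ occurring in the monomial part $x_1^{d_1-\alpha_1}(x_2^{d_2-\alpha_2},x_3^{d_3-\alpha_3})$, and that the numerical constraints coming from symmetry of the semigroup translate precisely into $d_1=\alpha_2+\alpha_3$. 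This is a bookkeeping matter rather than a conceptual one, and the remark preceding the corollary already asserts that the matching holds, so in the write-up it suffices to point to \cite[Theorem 5.6]{guerrieri} and \Cref{thm:gorenstein_slp} and let the reader consult the cited result for the explicit dictionary.
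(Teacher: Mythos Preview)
Your proposal is correct and matches the paper's own argument essentially verbatim: the paper simply observes (in the sentence immediately preceding the corollary) that by \cite[Theorem 5.6]{guerrieri} every such algebra is of the form in \Cref{thm:gorenstein_slp} with $n=3$ and $c=-1$, and then applies that theorem. Your added remarks about checking $K'\neq R$ and matching Guerrieri's presentation to the shape of $K'$ are reasonable elaborations but not additional ideas.
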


\begin{rmk}\label{rmk:gorenst_glue}
	In \cite{gorensteinglue}, the authors study Artinian Gorenstein algebras obtained by 
	a topological decomposition technique, referred to as the \emph{connected sum construction}. One result, see \cite[Corollary 5.14]{gorensteinglue}, is that an algebra defined by an ideal
	\[
	(x^{a+b-k}+y^{b-k}z^a,y^{b+1},z^{a+1})+x(y^{k+1},z)
	\]
	has the SLP. This now also follows as a special case of \Cref{thm:gorenstein_slp} with
	\begin{align*}
		&d_1=a+b-k, \ d_2=b+1, \ d_3=a+1, \\
		& \alpha_1= a+b-k-1, \ \alpha_2=b-k, \ \alpha_3=a, \ \mbox{and} \ c=1.
	\end{align*}
	
	A natural question is whether the decomposition method that we propose in this paper is related to the connected sum construction.

\end{rmk}
\section{Ideals coming from tables}
\label{sec:table}

In this section we introduce a class of ideals extending those in \Cref{sec:gor}. We prove that these algebras have the SLP in the narrow sense using \Cref{cor:symmetricHS}.

\begin{definition}
An $(s,n)$-\emph{table}, where $0\le s<n$, is an $(s+1)\times n$ matrix of non-negative integers as in \Cref{cond} such that
\begin{enumerate}
\item $\alpha_{i,j}=0$ for all $i>j$,
\item $\sum_{i=1}^{s}{\alpha_{i,j}}\le d_j$ for all $j$,
\item $d_k=\sum_{i=1}^{k-1}\alpha_{i,k}+\sum_{j=k+1}^n\alpha_{k,j}+\alpha_{k+1,k+1}$ for $1 \le k \le s$, where we set $\alpha_{s+1,s+1}=0$.

\end{enumerate}
\end{definition}
  
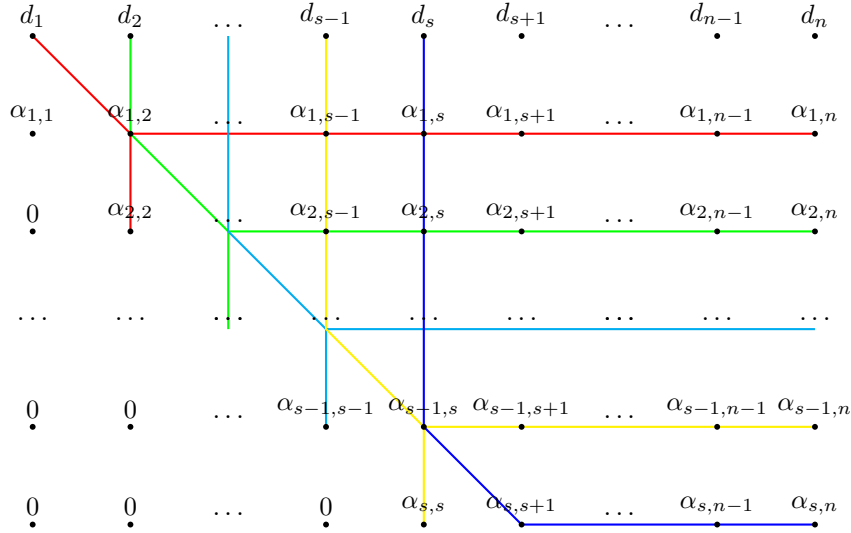
\begin{figure}[H]

\centering
\begin{tikzpicture}[scale=1.3]

      \draw [red,thick]     (2,-1) -- (9,-1);
      \draw [red,thick]    (2,-1) -- (2,-2);
      \draw [red,thick]  (2,-1)--(1,0);
      
      \draw[green, thick]  (2,-1)-- (3,-2) -- (9,-2);
      \draw[green, thick]  (3,-2) -- (3,-3);
      \draw[green,thick] (2,-1)--(2,0);

      \draw[cyan, thick]  (3,-1)-- (3,-2) -- (4,-3)--(9,-3);
      \draw[cyan, thick]  (4,-4)-- (4,-3);
      \draw[cyan, thick] (3,-1)--(3,0);

      \draw[yellow,thick]  (4,-1)-- (4,-3) --(5,-4)-- (9,-4);
      \draw[yellow,thick]  (5,-4)-- (5,-5);
      \draw[yellow,thick]  (4,-1)-- (4,0);
      \draw[blue,thick]  (5,-1)--(5,-4) --(6,-5)-- (9,-5);
      \draw[blue,thick]  (5,-1)--(5,-0);

    \foreach \x in {1,...,2}{
    \node [above, thin] at (\x,0) {$d_\x$};
    \node [above, thin] at (\x,-1) {$\alpha_{1,\x}$};
    }
    \node [above, thin] at (1,-2) {$0$};
    \node [above, thin] at (2,-2) {$\alpha_{2,2}$};

    \node [above, thin] at (4,-1) {$\alpha_{1,s-1}$};
    \node [above, thin] at (5,-1) {$\alpha_{1,s}$};
    \node [above, thin] at (6,-1) {$\alpha_{1,s+1}$};
    \node [above, thin] at (8,-1) {$\alpha_{1,n-1}$};
    \node [above, thin] at (9,-1) {$\alpha_{1,n}$};
    
    \node [above, thin] at (4,-2) {$\alpha_{2,s-1}$};
    \node [above, thin] at (5,-2) {$\alpha_{2,s}$};
    \node [above, thin] at (6,-2) {$\alpha_{2,s+1}$};
    \node [above, thin] at (8,-2) {$\alpha_{2,n-1}$};
    \node [above, thin] at (9,-2) {$\alpha_{2,n}$};

    \node [above, thin] at (4,0) {$d_{s-1}$};    
    \node [above, thin] at (5,0) {$d_{s}$};  
    \node [above, thin] at (6,0) {$d_{s+1}$};  
    \node [above, thin] at (8,0) {$d_{n-1}$};
    \node [above, thin] at (9,0) {$d_{n}$};

    \node [above, thin] at (1,-4) {$0$};
    \node [above, thin] at (2,-4) {$0$};
    
    \node [above, thin] at (4,-4) {$\alpha_{s-1,s-1}$};
    
    \node [above, thin] at (5,-4) {$\alpha_{s-1,s}$};
    
    \node [above, thin] at (6,-4) {$\alpha_{s-1,s+1}$};
    \node [above, thin] at (8,-4) {$\alpha_{s-1,n-1}$};
    \node [above, thin] at (9,-4) {$\alpha_{s-1,n}$};

    \node [above, thin] at (1,-5) {$0$};
    \node [above, thin] at (2,-5) {$0$};
    \node [above, thin] at (4,-5) {$0$};
    \node [above, thin] at (5,-5) {$\alpha_{s,s}$};
    \node [above, thin] at (6,-5) {$\alpha_{s,s+1}$};
    \node [above, thin] at (8,-5) {$\alpha_{s,n-1}$};
    \node [above, thin] at (9,-5) {$\alpha_{s,n}$};
    
    \foreach \x in {1,...,9}{
    \node [above, thin] at (\x,-3) {$\ldots$};
    }
    \foreach \y in {-5,...,0}{
     \node [above, thin] at (3,\y) {$\ldots$};
     \node [above, thin] at (7,\y) {$\ldots$};
    }

    \foreach \x in {1,...,2}{
    \foreach \y in {-2,...,0}{
    \fill[fill=black] (\x,\y) circle (0.03 cm);
    }}
    
    \foreach \x in {1,...,2}{
    \foreach \y in {-5,...,-4}{
    \fill[fill=black] (\x,\y) circle (0.03 cm);
    }}
    
    \foreach \x in {4,...,6}{
    \foreach \y in {-2,...,0}{
    \fill[fill=black] (\x,\y) circle (0.03 cm);
    }}
    \foreach \x in {4,...,6}{
    \foreach \y in {-5,...,-4}{
    \fill[fill=black] (\x,\y) circle (0.03 cm);
    }}
    
    \foreach \x in {8,...,9}{
    \foreach \y in {-2,...,0}{
    \fill[fill=black] (\x,\y) circle (0.03 cm);
    }}
    \foreach \x in {8,...,9}{
    \foreach \y in {-5,...,-4}{
    \fill[fill=black] (\x,\y) circle (0.03 cm);
    }}

\end{tikzpicture}
\caption{The $\alpha_{i,j}$'s connected by edges of the same color sum to the corresponding $d_k$.}
\label{cond}
\end{figure}

To each table $T$ we will associate a monomial ideal in $\mathbb{K}[x_1,\ldots, x_n]$
$$K(T):=K_0(T)+K_1(T)+ \cdots +K_s(T),$$
where 
$$K_0(T):=(x_1^{d_1},x_2^{d_2},\ldots,x_n^{d_n}),$$
$$K_1(T):=x_1^{d_1-\alpha_{1,1}}(x_2^{d_2-\alpha_{1,2}},\ldots,x_n^{d_n-\alpha_{1,n}}),$$
$$K_2(T):=x_1^{d_1-\alpha_{1,1}}x_2^{d_2-\alpha_{1,2}-\alpha_{2,2}}(x_3^{d_3-\alpha_{1,3}-\alpha_{2,3}},\ldots,x_n^{d_n-\alpha_{1,n}-\alpha_{2,n}}),$$
$$\vdots$$
$$K_s(T)\!:=\!x_1^{d_1-\alpha_{1,1}}\!\!\!\cdots\!  x_s^{d_s-\alpha_{1,s}- . . . -\alpha_{s,s}}(x_{s+1}^{d_{s+1}-\alpha_{1,s+1}- . . . -\alpha_{s,s+1}}\!\!\!, . . . ,x_n^{d_n-\alpha_{1,n}- . . . -\alpha_{s,n}}).$$


\begin{ex}($1$ condition, $3$ variables)
\label{allowzeros}
\begin{figure}[H]
\centering
\begin{tikzpicture}[scale=1.5]
\draw[red,thick]  (2,-1)--(3,-1);
\draw[red,thick] (2,-1)--(1,0); 
\node [above, thin] at (1,0) {$6$};
\node [above, thin] at (2,0) {$7$};
\node [above, thin] at (3,0) {$4$};
\node [above, thin] at (1,-1) {$2$};
\node [above, thin] at (2,-1) {$6$};
\node [above, thin] at (3,-1) {$0$};
\foreach \x in {1,...,3}{
    \foreach \y in {-1,...,0}{
    \fill[fill=black] (\x,\y) circle (0.03 cm);
    }}
 
\end{tikzpicture}
\caption{$1$ condition, $3$ variables}
\label{fig:31}
\end{figure}
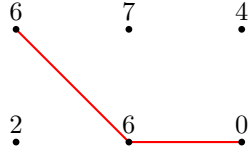

Consider the table $T$ in \Cref{fig:31}. Here we have $K_0(T)=(x^6,y^7,z^4)$, $K_1(T)=x^4(y,z^4)$. Thus $K(T)=K_0(T)+K_1(T)=(x^6,y^7,z^4,x^4y,x^4z^4)=(x^6,y^7,z^4,x^4y).$
\end{ex}

\begin{figure}[H]
\centering
\begin{tikzpicture}[scale=1]

\draw[red,thick]  (2,-2)--(2,-1)--(4,-1);
\draw[red,thick] (2,-1)--(1,0);  

\draw[green,thick]  (2,-1)--(3,-2)--(4,-2);
\draw[green,thick] (2,-1)--(2,0); 

\draw[red,thick]  (6,-1)--(8,-1);
\draw[red,thick] (6,-1)--(5,0); 

\draw[dashed] (4.5,0.5)-- (4.5,-2.5);
\draw[dashed] (8.5,0.5)-- (8.5,-2.5);

\node [above, thin] at (1,0) {$12$};
\node [above, thin] at (2,0) {$7$};
\node [above, thin] at (3,0) {$5$};
\node [above, thin] at (4,0) {$4$};
\node [above, thin] at (1,-1) {$3$};
\node [above, thin] at (2,-1) {$4$};
\node [above, thin] at (3,-1) {$3$};
\node [above, thin] at (4,-1) {$2$};
\node [above, thin] at (1,-2) {$0$};
\node [above, thin] at (2,-2) {$3$};
\node [above, thin] at (3,-2) {$2$};
\node [above, thin] at (4,-2) {$1$};

\node [above, thin] at (5,0) {$12$};
\node [above, thin] at (6,0) {$7$};
\node [above, thin] at (7,0) {$5$};
\node [above, thin] at (8,0) {$4$};
\node [above, thin] at (5,-1) {$3$};
\node [above, thin] at (6,-1) {$7$};
\node [above, thin] at (7,-1) {$3$};
\node [above, thin] at (8,-1) {$2$};

\node [above, thin] at (9,0) {$9$};
\node [above, thin] at (10,0) {$7$};
\node [above, thin] at (11,0) {$5$};
\node [above, thin] at (12,0) {$4$};

    \foreach \x in {1,...,4}{
    \foreach \y in {-2,...,0}{
    \fill[fill=black] (\x,\y) circle (0.03 cm);
    }}
    \foreach \x in {5,...,8}{
    \foreach \y in {-1,...,0}{
    \fill[fill=black] (\x,\y) circle (0.03 cm);
    }}
    \foreach \x in {9,...,12}{
    \foreach \y in {0,...,0}{
    \fill[fill=black] (\x,\y) circle (0.03 cm);
    }}

\end{tikzpicture}
\caption{Three different tables representing the same ideal $K=(x^9,y^7,z^5,w^4).$}
\label{difftables}
\end{figure}
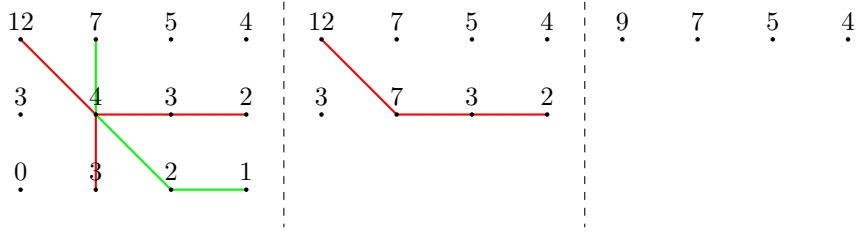

Note that if $s=0$ (that is, the table has $1$ row), then the corresponding ideal is a monomial complete intersection or even the whole ring. Therefore, in this section we will always assume $s\ge 1$. Note, however, that even if $s\ge 1$, the corresponding ideal might still be a complete intersection (as \Cref{difftables} shows) or even the whole ring.
\begin{theorem}
Let $K=K(T)$, where $T$ is a table with $s\ge 1$. Then $R/K$ has the SLP in the narrow sense, and its maximal socle degree is $d_1+ \cdots +d_n-\alpha_{1,1}-n$, unless $K=R$.
\end{theorem}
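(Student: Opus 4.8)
The plan is to prove the statement by induction on $s$, the number of ``conditions'' (rows minus one) in the table, using \Cref{cor:symmetricHS} at each step. The base case $s=1$ is exactly \Cref{lemma:gorenst_init}: a table with two rows encodes precisely the ideal $(x_1^{d_1},\ldots,x_n^{d_n}) + x_1^{d_1-\alpha_1}(x_2^{d_2-\alpha_2},\ldots,x_n^{d_n-\alpha_n})$ with $\alpha_i = \alpha_{1,i}$ and the constraint $d_1 = \alpha_{1,2}+\cdots+\alpha_{1,n}$ coming from condition (3) with $k=1$ (since $\alpha_{2,2}=0$ when $s=1$), so the socle degree is $d_1+\cdots+d_n - \alpha_{1,1} - n$ as claimed.

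For the inductive step, given a table $T$ with $s \ge 2$ conditions, I would set $m = x_1^{d_1-\alpha_{1,1}}$ and decompose $K = K(T)$ with respect to $m$ into $I = K+(m)$ and $J = K:(m)$. The key computation is to identify $I$ and $J$ as ideals coming from smaller tables. I expect that $I = (x_1^{d_1-\alpha_{1,1}}, x_2^{d_2}, \ldots, x_n^{d_n})$ so that $R/I \cong \Bbbk[x_2,\ldots,x_n]/K(T')$ where $T'$ is the $(s-1, n-1)$-table obtained by deleting the first column and the first row of $T$ (and adjusting $d_2$ downward by $\alpha_{1,2}$, etc., so that the new top row is the old second row); one must check conditions (1)--(3) still hold for $T'$. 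Similarly, $J$ should be $x_1^{\alpha_{1,1}}$ together with the ideal obtained from $T$ by replacing the first row's entries: $J$ corresponds to an $(s-1,n)$-table whose $d_1$ has been replaced by $\alpha_{1,1}$ and whose remaining column-sum conditions are inherited. Both $I$ and $J$ then fall under the inductive hypothesis (possibly after stripping off a complete-intersection factor, or handling degenerate cases where a generator becomes a pure power, exactly as in the itemized special cases of the proof of \Cref{lemma:gorenst_init}), so $R/I$ and $R/J$ have the SLP in the narrow sense.

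It then remains to verify the socle-degree relation $r - s_J = 2\deg(m) = 2(d_1 - \alpha_{1,1})$ required by \Cref{cor:symmetricHS}, where $r$ is the socle degree of $R/I$ and $s_J$ that of $R/J$. Using the inductive formula, $r$ should come out to $d_1+\cdots+d_n - \alpha_{1,1} - n$ (the target value for $R/K$) and $s_J$ to $(\alpha_{1,1} + \text{adjusted }d_2 + \cdots) - \cdots - n$; the difference should telescope via condition (3) with $k=1$, i.e.\ $d_1 = \sum_{i}\alpha_{i,1}\text{-type terms} + \sum_{j>1}\alpha_{1,j} + \alpha_{2,2}$ — here since $i$ ranges only up to $k-1=0$, this reads $d_1 = \alpha_{1,2}+\cdots+\alpha_{1,n}+\alpha_{2,2}$. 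Plugging this in should give exactly $r - s_J = 2(d_1-\alpha_{1,1})$, and then \Cref{cor:symmetricHS} delivers that $R/K$ has the SLP in the narrow sense with socle degree $r$.

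The main obstacle I anticipate is purely bookkeeping: correctly showing that the two pieces $I$ and $J$ really are (up to a harmless complete-intersection tensor factor or a degenerate collapse) ideals coming from honest $(s-1,\cdot)$-tables satisfying all three defining conditions, and that the exponent subtractions $d_j - \alpha_{1,j} - \cdots$ stay nonnegative so the tables are well-defined. The degenerate cases — when some adjusted generator becomes a pure power of a variable, when an ideal equals $R$, or when $\alpha_{1,1} = 0$ or $= d_1$ — will need to be enumerated and dispatched separately, mirroring the bulleted case analysis in \Cref{lemma:gorenst_init}; none is hard, but there are several of them. Once the combinatorial identification is in place, the Hilbert-series and socle-degree arithmetic is a routine application of condition (3), and the Lefschetz conclusion is immediate from \Cref{cor:symmetricHS}.
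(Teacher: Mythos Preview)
Your inductive strategy --- the same monomial $m=x_1^{d_1-\alpha_{1,1}}$, the appeal to \Cref{cor:symmetricHS}, and the use of condition~(3) with $k=1$ for the socle-degree arithmetic --- is exactly the paper's approach. But you have swapped the roles of $I$ and $J$ in the key identification, and as written your proof would not go through.

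Concretely: $I=K+(m)=(x_1^{d_1-\alpha_{1,1}},x_2^{d_2},\ldots,x_n^{d_n})$ is a monomial complete intersection in all $n$ variables; it is \emph{not} isomorphic to $\Bbbk[x_2,\ldots,x_n]/K(T')$ for any smaller table, and no induction is needed for it --- Stanley's theorem gives the SLP in the narrow sense directly, with socle degree $r=d_1+\cdots+d_n-\alpha_{1,1}-n$. The inductive structure lives in $J=K:(m)$: one finds $J=(x_1^{\alpha_{1,1}})+J''$ with $J''\subset\Bbbk[x_2,\ldots,x_n]$, so $R/J\cong \Bbbk[x_1]/(x_1^{\alpha_{1,1}})\otimes_\Bbbk \Bbbk[x_2,\ldots,x_n]/J'$, and it is $J'$ that is the ideal of the $(s-1,n-1)$-table you describe (delete the first column, make the new top row $d_j-\alpha_{1,j}$, and keep rows $2,\ldots,s$ as the conditions). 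The induction hypothesis applies to $J'$, and then the tensor-product lemma \cite[Theorem~3.34]{book} transports the SLP in the narrow sense to $R/J$. The check $r-s_J=2(d_1-\alpha_{1,1})$ then reduces, exactly as you say, to $d_1=\alpha_{1,2}+\cdots+\alpha_{1,n}+\alpha_{2,2}$. So your plan is salvageable: reassign your ``$T'$'' to $J$ rather than $I$, and invoke the tensor-product SLP lemma there; the degenerate cases ($I=R$ or $J=R$) are then handled as you anticipate.
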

\begin{proof}
Assume $K\not=R$. We will prove the statement by induction on the number of rows of the table. The base case $s=1$ has been proven in \Cref{{lemma:gorenst_init}}. 
Let $m=x_1^{d_1-\alpha_{1,1}}$ and let $I=K+(m)$ and $J=K:(m)$. We will consider $4$ different cases:
\begin{itemize}
\item $I=J=R$. We assumed $K\not=R$. Now, $I=K+(m)=R$, therefore, $(m)=R$. But then $R=J=K:(m)=K:R=K$, which implies $K=R$ to start with, which is a contradiction.
\item $I\not=R$, $J=R$. Since $J=K:(m)=R$, we conclude that $m\in K$, which means $R\not=I=K+(m)=K$. Note that $R\not=I=(x_1^{d_1-\alpha_{1,1}},x_2^{d_2},\ldots,x_n^{d_n})$, which implies that $R/I=R/K$ has the SLP in the narrow sense and the maximal socle degree $d_1+ \cdots +d_n-\alpha_{1,1}-n$.
\item $I=R$, $J\not=R$. We assumed $K\not=R$. Now, $I=K+(m)=R$, therefore, $(m)=R$ and $R\not=J=K:(m)=K:R=K$. Note that $(m)=R$ is equivalent to $d_1=\alpha_{1,1}$. This means, we are left to show that $R/K=R/J$ possesses the SLP in the narrow sense and has the maximal socle degree $d_1+ \cdots +d_n-\alpha_{1,1}-n=d_1+ \cdots +d_n-\alpha_{1,1}-n-2(d_1-\alpha_{1,1})$. We shall return to this case later.

\item $I\not=R$, $J\not=R$. Note that $I=(x_1^{d_1-\alpha_{1,1}},x_2^{d_2},\ldots,x_n^{d_n})$. Then algebra $R/I$ possesses the SLP in the narrow sense and has the maximal socle degree $d_1+ \cdots +d_n-\alpha_{1,1}-n$. In order to apply \Cref{cor:symmetricHS}, we need to show that $R/J$ possesses the SLP in the narrow sense and has the maximal socle degree $d_1+ \cdots +d_n-\alpha_{1,1}-n-2(d_1-\alpha_{1,1})$.
\end{itemize}
To combine the last $2$ cases: we need to show that if $J\not=R$, $R/J$ has the SLP in the narrow sense and the maximal socle degree $d_1+ \cdots +d_n-\alpha_{1,1}-n-2(d_1-\alpha_{1,1})$.

Note that
\begin{align*}
J=(&x_1^{\alpha_{1,1}},x_2^{(d_2-\alpha_{1,2})}, \ldots, x_n^{(d_n-\alpha_{1,n})},\\
& x_2^{(d_2-\alpha_{1,2})-\alpha_{2,2}}(x_3^{(d_3-\alpha_{1,3})-\alpha_{2,3}},\ldots,x_n^{(d_n-a_{1,n})-a_{2,n}}),\ldots \\
& x_2^{(d_2-\alpha_{1,2})-\alpha_{2,2}}\cdots x_s^{(d_s-\alpha_{1,s})- \cdots - \alpha_{s,s}}(x_{s+1}^{(d_{s+1}-\alpha_{1,s+1})- \cdots - \alpha_{s,s+1}},\ldots\\
& \hspace{7cm} \ldots, x_n^{(d_n-\alpha_{1,n})- \cdots - \alpha_{s,n}})). 
\end{align*}
 
We also remark that 
\[\Bbbk[x_1,\ldots, x_n]/J=\Bbbk[x_2,\ldots, x_n]/J'\otimes_\Bbbk \Bbbk[x_1]/(x_1^{\alpha_{1,1}})\] 
where $J'$ is obtained from $J$ by removing the first generator. Note that $J'$ is not the whole ring and that $\alpha_{1,1}\not=0$.
By \cite[Theorem 3.34]{book}, it is enough to show that $\Bbbk[x_2,\ldots, x_n]/J'$ possesses the SLP in the narrow sense and has the maximal socle degree \[d_1+ \cdots +d_n-\alpha_{1,1}-n-2(d_1-\alpha_{1,1})-(\alpha_{1,1}-1)=-d_1+d_2+ \cdots +d_n-n+1.\] First of all note that $J'$ comes from a table. To obtain a table for $J'$, one should remove the first column of the table for $K$ together with the first condition and subtract the first two rows, as shown in \Cref{collapse}. By the induction hypothesis (noting that $J'\not=R$) we know that $J'$ possesses the SLP in the narrow sense and has the maximal socle degree $(d_2-\alpha_{1,2})+ \cdots +(d_n-\alpha_{1,n})-\alpha_{2,2}-(n-1)$. Thus we are left to show that $$(d_2-\alpha_{1,2})+ \cdots +(d_n-\alpha_{1,n})-\alpha_{2,2}-(n-1)=-d_1+d_2+ \cdots +d_n.$$ This is equivalent to $\alpha_{1,2}+\alpha_{1,3}+ \cdots +\alpha_{1,n}+\alpha_{2,2}=d_1,$ which is exactly the first condition of the table for $K$ (the removed red condition). 
\end{proof}

\begin{figure}[H]

\centering
\begin{tikzpicture}[scale=1.3]
	 \draw [red,thick]     (2,-1) -- (9,-1);
      \draw[red,thick]    (2,-1) -- (2,-2);
      \draw [red,thick]  (2,-1)--(1,0);
      
      \draw[green, thick]  (2,-1)-- (3,-2) -- (9,-2);
      \draw[green, thick]  (3,-2) -- (3,-3);
      \draw[green,thick] (2,-1)--(2,0);

       \draw[cyan, thick]  (3,-1)-- (3,-2) -- (4,-3)--(9,-3);
      \draw[cyan, thick]  (4,-4)-- (4,-3);
      \draw[cyan, thick] (3,-1)--(3,0);

      \draw[yellow,thick]  (4,-1)-- (4,-3) --(5,-4)-- (9,-4);
      \draw[yellow,thick]  (5,-4)-- (5,-5);
      \draw[yellow,thick]  (4,-1)-- (4,0);
      
      \draw[blue,thick]  (5,-1)--(5,-4) --(6,-5)-- (9,-5);
      \draw[blue,thick]  (5,-1)--(5,-0);
      \draw[dashed]  (1.5,0.5)-- (1.5,-5.5);
      \foreach \x in {2,...,9}{    
   
     \draw[dashed, -latex] (\x,-1) arc (225:135:cos 45);
     }
    \foreach \x in {1,...,2}{
    \node [above, thin] at (\x,0) {$d_\x$};

    }
    \node [above, thin] at (2,-1) {$\alpha_{1,2}$};
    \node [above, thin] at (1,-1) {$\alpha_{1,1}$};
    \node [above, thin] at (1,-2) {$0$};
    \node [above, thin] at (2,-2) {$\alpha_{2,2}$};
    \node at (1.5,-5.5) {\rotatebox{90}{\ding{34}}};

    \node [above, thin] at (4,-1) {$\alpha_{1,s-1}$};
    \node [above, thin] at (5,-1) {$\alpha_{1,s}$};
    \node [above, thin] at (6,-1) {$\alpha_{1,s+1}$};
    \node [above, thin] at (8,-1) {$\alpha_{1,n-1}$};
    \node [above, thin] at (9,-1) {$\alpha_{1,n}$};
    
    \node [above, thin] at (4,-2) {$\alpha_{2,s-1}$};
    \node [above, thin] at (5,-2) {$\alpha_{2,s}$};
    \node [above, thin] at (6,-2) {$\alpha_{2,s+1}$};
    \node [above, thin] at (8,-2) {$\alpha_{2,n-1}$};
    \node [above, thin] at (9,-2) {$\alpha_{2,n}$};

    \node [above, thin] at (4,0) {$d_{s-1}$};    
    \node [above, thin] at (5,0) {$d_{s}$};  
    \node [above, thin] at (6,0) {$d_{s+1}$};  
    \node [above, thin] at (8,0) {$d_{n-1}$};
    \node [above, thin] at (9,0) {$d_{n}$};

    \node [above, thin] at (1,-4) {$0$};
    \node [above, thin] at (2,-4) {$0$};
    
    \node [above, thin] at (4,-4) {$\alpha_{s-1,s-1}$};
    
    \node [above, thin] at (5,-4) {$\alpha_{s-1,s}$};
    
    \node [above, thin] at (6,-4) {$\alpha_{s-1,s+1}$};
    \node [above, thin] at (8,-4) {$\alpha_{s-1,n-1}$};
    \node [above, thin] at (9,-4) {$\alpha_{s-1,n}$};

    \node [above, thin] at (1,-5) {$0$};
    \node [above, thin] at (2,-5) {$0$};
    \node [above, thin] at (4,-5) {$0$};
    \node [above, thin] at (5,-5) {$\alpha_{s,s}$};
    \node [above, thin] at (6,-5) {$\alpha_{s,s+1}$};
    \node [above, thin] at (8,-5) {$\alpha_{s,n-1}$};
    \node [above, thin] at (9,-5) {$\alpha_{s,n}$};
    
    \foreach \x in {1,...,9}{
    \node [above, thin] at (\x,-3) {$\ldots$};
    }
    \foreach \y in {-5,...,0}{
     \node [above, thin] at (3,\y) {$\ldots$};
     \node [above, thin] at (7,\y) {$\ldots$};
      }
      
       \node[cross,thin, scale=0.8] at (2,-1.5) {};
       \foreach \x in {2,...,8}{
       \node[cross,thin, scale=0.8] at (0.5+\x,-1) {};
       }
     
    \foreach \x in {1,...,2}{
    \foreach \y in {-2,...,0}{
    \fill[fill=black] (\x,\y) circle (0.03 cm);
    }}
    
    \foreach \x in {1,...,2}{
    \foreach \y in {-5,...,-4}{
    \fill[fill=black] (\x,\y) circle (0.03 cm);
    }}
    
    \foreach \x in {4,...,6}{
    \foreach \y in {-2,...,0}{
    \fill[fill=black] (\x,\y) circle (0.03 cm);
    }}
    \foreach \x in {4,...,6}{
    \foreach \y in {-5,...,-4}{
    \fill[fill=black] (\x,\y) circle (0.03 cm);
    }}
    
    \foreach \x in {8,...,9}{
    \foreach \y in {-2,...,0}{
    \fill[fill=black] (\x,\y) circle (0.03 cm);
    }}
    \foreach \x in {8,...,9}{
    \foreach \y in {-5,...,-4}{
    \fill[fill=black] (\x,\y) circle (0.03 cm);
    }}

\foreach \x in {2,...,9}{    
   
  \node [thin] at (\x-0.3,-0.3) {$_{-}$};
    }

\end{tikzpicture}
\caption{Obtainiting a table for $J'$ from a table for $K$} 
\label{collapse}
\end{figure}
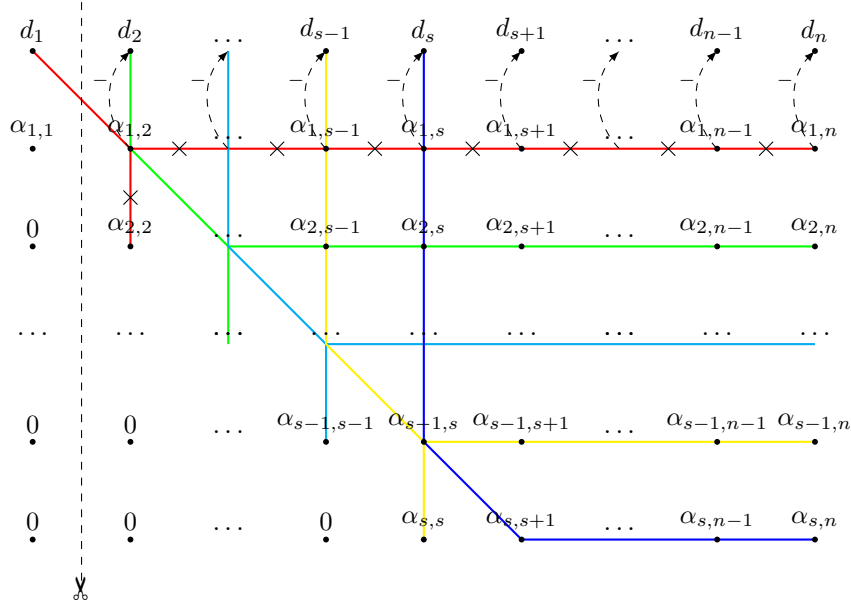

\begin{theorem}\label{thm:table_binomial}
Let $T$ be a table, $K=K(T)$ and let $K'$ be the ideal, obtained by replacing $x_1^{d_1}$ with $x_1^{d_1}+cx_2^{\alpha_{2}}x_3^{\alpha_{3}}\cdots x_n^{\alpha_{n}}$, where $c$ is any constant and $\alpha_i$ is the sum of all numbers of column $i$ which contribute to the first (red) condition. In other words, $\alpha_2=\alpha_{1,2}$ for $s=1$, $\alpha_2=\alpha_{1,2}+\alpha_{2,2}$ for $s\ge 2$, and $\alpha_i=\alpha_{1,i}$ for $i\ge 3$ and for any $s$. Then $R/K'$ has the same Hilbert series as $R/K$ and possesses the SLP.
\end{theorem}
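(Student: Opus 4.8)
The plan is to follow the template of the proof of \Cref{thm:gorenstein_slp}: I would exhibit a monomial order $\prec$ for which the displayed generating set of $K'$ is a Gröbner basis whose initial ideal is exactly $K=K(T)$, and then read off both assertions formally. We may assume $K\neq R$, since otherwise the statement is vacuous; then $K_0(T)\neq R$, so every $d_i\geq 1$ and in particular $x_1^{d_1}$ is a nonconstant monomial. Let $\prec$ be any monomial order with $x_1\succ x_i$ for all $i\geq 2$, for instance the lexicographic order. The generators of $K'$ are the binomial $g=x_1^{d_1}+c\,x_2^{\alpha_2}x_3^{\alpha_3}\cdots x_n^{\alpha_n}$ together with $x_2^{d_2},\dots,x_n^{d_n}$ and the monomial generators of $K_1(T),\dots,K_s(T)$. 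Condition~(3) of the table with $k=1$ reads $d_1=\alpha_{2,2}+\sum_{j=2}^{n}\alpha_{1,j}$, which is exactly $\alpha_2+\alpha_3+\cdots+\alpha_n$, so the two monomials in $g$ have the same total degree $d_1$; but $x_1^{d_1}$ has positive $x_1$-exponent while $x_2^{\alpha_2}\cdots x_n^{\alpha_n}$ does not, so the leading term of $g$ is $x_1^{d_1}$. Thus the leading terms of the generators of $K'$ are precisely the monomial generators of $K(T)$, and it remains to verify Buchberger's criterion.

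Since the S-polynomial of two monomials is zero, the only S-polynomials that matter are those of $g$ with a monomial generator $u=x_1^{a_1}x_2^{a_2}\cdots x_n^{a_n}$ of $K$. The first point I would record is that every such $u$ has $a_1\leq d_1$: a pure power $x_i^{d_i}$ ($i\geq 2$) has $a_1=0$, and every generator of $K_\ell(T)$ has $a_1=d_1-\alpha_{1,1}\leq d_1$; moreover all exponents appearing in the $K_\ell(T)$ are non-negative, since a partial column sum $\sum_{i=1}^{\ell}\alpha_{i,j}$ is at most $\sum_{i=1}^{s}\alpha_{i,j}\leq d_j$ by condition~(2). Hence $\operatorname{lcm}(x_1^{d_1},u)=x_1^{d_1}x_2^{a_2}\cdots x_n^{a_n}$, and a direct computation gives
\[
S(g,u)=c\,x_2^{a_2+\alpha_2}x_3^{a_3+\alpha_3}\cdots x_n^{a_n+\alpha_n},
\]
a nonzero scalar times a single monomial in the variables $x_2,\dots,x_n$. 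So it suffices to show that this monomial already lies in $K_0(T)=(x_1^{d_1},\dots,x_n^{d_n})$; having no factor of $x_1$, this just says it is divisible by $x_i^{d_i}$ for some $i\geq 2$, and then it reduces to zero against the generator $x_i^{d_i}$ of $K'$.

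I would finish by checking this divisibility case by case on $u$. If $u=x_i^{d_i}$ with $i\geq 2$, then the $x_i$-exponent of $S(g,u)$ is $d_i+\alpha_i\geq d_i$. If $u$ is a generator of $K_1(T)$, say $u=x_1^{d_1-\alpha_{1,1}}x_r^{d_r-\alpha_{1,r}}$ with $r\geq 2$, then the $x_r$-exponent of $S(g,u)$ is $(d_r-\alpha_{1,r})+\alpha_r$, which equals $d_r$ when $r\geq 3$ (because $\alpha_r=\alpha_{1,r}$) and equals $d_r+\alpha_{2,2}\geq d_r$ when $r=2$ (because $\alpha_2=\alpha_{1,2}+\alpha_{2,2}$, with $\alpha_{2,2}=0$ when $s=1$). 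Finally, if $u$ is a generator of $K_\ell(T)$ with $\ell\geq 2$, then $s\geq 2$ and the $x_2$-exponent of $u$ is $d_2-\alpha_{1,2}-\alpha_{2,2}$, so the $x_2$-exponent of $S(g,u)$ is $(d_2-\alpha_{1,2}-\alpha_{2,2})+\alpha_2=d_2$, using $\alpha_2=\alpha_{1,2}+\alpha_{2,2}$. In every case $S(g,u)$ is divisible by some $x_i^{d_i}$ with $i\geq 2$, hence reduces to zero, so the given generating set of $K'$ is a Gröbner basis and $\ini_\prec(K')=K$. Since the Hilbert function is unchanged under passing to an initial ideal, $\HS(R/K')=\HS(R/\ini_\prec(K'))=\HS(R/K)$; and since $R/\ini_\prec(K')=R/K$ has the SLP (indeed the SLP in the narrow sense) by the theorem on ideals coming from tables established above, Wiebe's result \cite{ini} gives that $R/K'$ has the SLP. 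The hard part here is purely the bookkeeping in the last step — verifying, across the three families of monomial generators of $K$ ($K_0$, $K_1$, and the $K_\ell$ with $\ell\geq 2$), that the leftover monomial $S(g,u)$ always lands back in $K_0(T)$; once that is in hand, the Hilbert series equality and the SLP are immediate.
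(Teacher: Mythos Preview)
Your proof is correct and follows exactly the approach the paper takes: show that $\ini_\prec(K')=K$ for a monomial order with $x_1\succ x_i$, then invoke the table theorem and Wiebe's result. The paper's own proof is a two-line reference to the analogous argument in \Cref{thm:gorenstein_slp}, whereas you carry out the S-polynomial check in full; your case analysis on the monomial generators of $K_0,K_1,K_\ell$ ($\ell\ge 2$) is accurate and constitutes the ``straightforward verification'' the paper alludes to.
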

\begin{proof}
Similarly to \Cref{thm:gorenstein_slp}, the initial ideal of $K'$ equals $K$ with respect to a monomial order fulfilling $x_1 \succ x_i$. 
Therefore, by Wiebe's result, we are done.
\end{proof}
\begin{rmk}
Let $K$ and $K'$ be as above, and set $s=1$. Then we will recover the classes of ideals from \Cref{sec:gor}. However, if $s>1$, the corresponding algebras will not be Gorenstein in general.
\end{rmk}

\begin{question} \label{q:main}
All the ideals in \Cref{sec:gor} and \Cref{sec:table}, as well as the class of ideals in \Cref{ex:1}, are glued centre-to-centre of several CI's, therefore they have the SLP in the narrow sense. Can this method be used to detect other classes of monomial algebras with SLP in the narrow sense? One should note that not all monomial algebras with the SLP in the narrow sense can be obtained in this way, see Example \ref{ex:not_glued}.
\end{question}

\begin{ex}\label{ex:not_glued}
Let $R=\Bbbk[x,y,z]$ and $K=(x^3,y^3,z^5,x^2y^2,xz,yz)$. The ideal $K$ is the initial ideal, w.\,r.\,t.\ the Lex order, of the ideal defined by the Macaulay dual generator $X^2Y^2+Z^4$. The algebra $R/K$ has the SLP and the Hilbert series $1+3t+4t^2+3t^3+t^4$. However, there is no monomial $m$ such that the conditions on the Hilbert functions of $I=K+(m)$ and $J=K:(m)$ in \Cref{thm:glue_maxrank} hold for all $d$.
\end{ex}

\section{Monomial almost complete intersections}
\label{sec:maci}

Stanley's result \cite{stanley}, that monomial complete intersections have the SLP, mentioned in Example \ref{ex:1}, is usually seen as the starting point of the study of the Lefschetz properties. Therefore it is natural to consider the Lefschetz properties for monomial almost complete intersections (m.a.c.i.'s), that is, algebras defined by ideals of the form 
$(x_1^{d_1}, \ldots, x_n^{d_n}, m)$, where $m$ is a monomial. 

 Brenner and Kaid \cite{brennerkaid} gave the now famous example that the algebra generated by $x_1^3,x_2^3,x_3^3,x_1 x_2 x_3$ fails the WLP. Migliore, Miro-Ro\'ig, and Nagel \cite{MMN} generalized this result and showed that $x_1^d,x_2^d,x_d^d,x_1 \cdots x_d$ fails the WLP for all $d \geq 3$. They also undertook a deep study of codimension three \emph{level} m.\,a.\,c.\,i.'s, establishing a conjecture \cite[Conjecture 6.8]{MMN} of the presence of this WLP for this class and proved the sufficient part of the conjecture.

Cook II and Nagel \cite{lozenges, maci} used the connection between monomial ideals in three variables and bipartite matchings to give further evidence for the conjecture. 
 
Here we focus on the equigenerated case. We will present two conjectures on equigenerated m.\,a.\,c.\,i.'s, one for the SLP, and one for the WLP. 

Our conjectures suggest a classification of the SLP and the WLP in the codimension three case, and we give partial necessary conditions in the SLP case using our basis splitting argument, and sufficient conditions using  the representation theory of $S_2$ in the WLP case.

\subsection{The SLP for some equigenerated m.\,a.\,c.\,i.'s}
\begin{theorem}
	The algebra defined by the ideal $(x_1^a,x_2^a, \ldots, x_n^a,x_1^{a-1}x_2)$ has the SLP.
\end{theorem}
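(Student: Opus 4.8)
The plan is to prove this by applying the basis-splitting machinery of \Cref{thm:glue_maxrank} (or rather \Cref{cor:symmetricHS}) inductively on $n$. Set $K = (x_1^a, x_2^a, \ldots, x_n^a, x_1^{a-1}x_2)$. The natural monomial to split off is $m = x_1^{a-1}$, giving $I = K + (x_1^{a-1}) = (x_1^{a-1}, x_2^a, \ldots, x_n^a)$, a monomial complete intersection, and $J = K : (x_1^{a-1}) = (x_1, x_2, x_3^a, \ldots, x_n^a)$, which is isomorphic (after killing $x_1$ and $x_2$) to the complete intersection $\Bbbk[x_3,\ldots,x_n]/(x_3^a,\ldots,x_n^a)$. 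Both $R/I$ and $R/J$ are monomial complete intersections, hence have the SLP by Stanley's result \cite{stanley}, and both have symmetric (indeed unimodal) Hilbert series. So the only thing to check is the numerical hypothesis of \Cref{thm:glue_maxrank} for every $d \geq 1$, or equivalently the socle-degree condition $r - s = 2\deg(m) = 2(a-1)$ of \Cref{cor:symmetricHS}.

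First I would record the two socle degrees. The maximal socle degree of $R/I$ is $r = (a-1) + (n-1)(a-1) = (a-1) + (n-1)(a-1)$; wait — more carefully, $R/I$ has generators of degrees $a-1, a, a, \ldots, a$ ($n-1$ copies of $a$), so $r = (a-2) + (n-1)(a-1) = na - n - 1$. Hmm, let me instead just compute: socle degree of $\Bbbk[x_1,\ldots,x_n]/(x_1^{e_1},\ldots,x_n^{e_n})$ is $\sum (e_i - 1)$. So $r = (a-2) + (n-1)(a-1) = na - n - 1$. For $R/J \cong \Bbbk[x_3,\ldots,x_n]/(x_3^a,\ldots,x_n^a)$ the socle degree is $s = (n-2)(a-1) = na - n - 2a + 2$. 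Then $r - s = (na - n - 1) - (na - n - 2a + 2) = 2a - 3$. That is \emph{not} $2(a-1) = 2a-2$, so \Cref{cor:symmetricHS} does not apply directly, and in fact $R/K$ need not have a symmetric Hilbert series. This is expected — an almost complete intersection is generally not Gorenstein — so one must fall back on the more flexible \Cref{thm:glue_maxrank} rather than its symmetric corollary.

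So the core of the argument is: verify the two Hilbert-function implications in \Cref{thm:glue_maxrank} for $R/I$ and $R/J$, for every $d \geq 1$. Since both algebras are complete intersections their Hilbert functions are explicit (products of truncated geometric series / Gaussian binomials), and each is symmetric and strictly unimodal. The point to check is that whenever $\HF(R/I, i) < \HF(R/I, i+d)$ (so $i$ and $i+d$ lie on the ascending side of the symmetric unimodal function for $R/I$, i.e. $2i + d < r$), then $\HF(R/J, i - (a-1)) \le \HF(R/J, i - (a-1)+d)$, which holds as soon as $2(i-(a-1)) + d \le s$, i.e. $2i + d \le s + 2(a-1) = na - n - 2a + 2 + 2a - 2 = na - n = r + 1$. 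Since $2i + d < r$ we are fine. The reverse implication ($>$ on the $I$ side forces $\ge$ on the $J$ side): $\HF(R/I,i) > \HF(R/I,i+d)$ means $2i + d > r$; we need $\HF(R/J, i-(a-1)) \ge \HF(R/J, i-(a-1)+d)$, which holds once $2(i-(a-1)) + d \ge s$, i.e. $2i + d \ge s + 2(a-1) = r+1 > r$, again fine. So the numerology works out with a margin of exactly one.

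Thus the key steps are: (1) set $m = x_1^{a-1}$ and identify $I$ and $J$ explicitly as the complete intersections above; (2) invoke Stanley's theorem to get the SLP for $R/I$ and $R/J$, with the sum of variables as Lefschetz element via \cite{MMN}; (3) compute the socle degrees $r = na-n-1$ and $s = na-n-2a+2$ and check the two Hilbert-function implications of \Cref{thm:glue_maxrank} hold for all $d \ge 1$ using symmetry and strict unimodality of complete-intersection Hilbert functions (this is the one place requiring a short inequality check, essentially the $r+1$ computation above — and note no induction on $n$ is actually needed, since $J$ is a complete intersection outright). I expect step (3), the Hilbert-function bookkeeping, to be the only real obstacle: one must make sure the "off-by-one" between $r - s$ and $2\deg(m)$ pushes in the favorable direction in both implications simultaneously, which the computation above confirms. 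Then \Cref{thm:glue_maxrank} with $f = (x_1 + \cdots + x_n)$ and all $d \ge 1$ yields the SLP for $R/K$.
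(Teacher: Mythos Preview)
Your proposal is correct and follows essentially the same route as the paper: decompose $K$ with $m = x_1^{a-1}$ into the two monomial complete intersections $I=(x_1^{a-1},x_2^a,\ldots,x_n^a)$ and $J=(x_1,x_2,x_3^a,\ldots,x_n^a)$, observe that \Cref{cor:symmetricHS} does not apply because $r-s=2a-3\neq 2(a-1)$, and instead verify the two Hilbert-function implications of \Cref{thm:glue_maxrank} directly via the identity $s+2(a-1)=r+1$. This is exactly the paper's argument, down to the same ``off-by-one'' bookkeeping.
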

\begin{proof}
	Let $K=(x_1^a,x_2^a, \ldots, x_n^a,x_1^{a-1}x_2)$, 
	\[
	I=K+(x_1^{a-1})=(x_1^{a-1},x_2^a, \ldots, x_n^a), \ \mbox{and}
	\]
	\[
	J=K:(x_1^{a-1})=(x_1,x_2,x_3^a, \ldots, x_n^a).
	\]
	Both $R/I$ and $R/J$ are monomial complete intersections, so they both have the SLP. The maximal socle degree of $R/I$ is $n(a-1)-1$ and the maximal socle degree of $R/J$ is $(n-2)(a-1)$. We shall now check that the conditions on the Hilbert function in \Cref{thm:glue_maxrank}, with $m=x_1^{a-1}$, are satisfied. If $\HF(R/I,i)<\HF(R/I,i+d)$ then we must have $2i+d <n(a-1)-1$. This implies $2(i-a+1)+d<(n-2)(a-1)-1< (n-2)(a-1)$, which means that $\HF(R/J,i-a+1)<\HF(R/J,i-a+1+d)$. If $\HF(R/I,i)>\HF(R/I,i+d)$ then $2i+d >n(a-1)-1$, which implies $2(i-a+1)+d>(n-2)(a-1)-1$. Then $2(i-a+1)+d \ge(n-2)(a-1)$, which means that $\HF(R/J,i-a+1)\ge \HF(R/J,i-a+1+d)$. 
\end{proof}

We believe that the algebra defined by $(x_1^a,x_2^a, \ldots, x_n^a,x_1^{a-2}x_2x_3)$ for $a \ge 4$ also has the SLP, but unfortunately our basis splitting argument does not seem to apply in this case. For the codimension three case we have a conjecture for all possible m.\,a.\,c.\,i.’s with the SLP. The conjecture is based on computer calculations in \emph{Macaulay2} \cite{M2} using the package \emph{MaximalRankProperties} \cite{maxrank}.

\begin{conjecture}
The equigenerated m.\,a.\,c.\,i.’s of codimension three with the SLP are given by
$(x_1^a,x_2^a, x_3^a,x_1^{a-1}x_2)$ with $a \geq 2$, $(x_1^a,x_2^a, x_3^a,x_1^{a-2}x_2x_3)$ with $ a \geq 4$, $(x_1^5,x_2^5,x_3^5,x_1^2x_2^2x_3)$ and $(x_1^7,x_2^7,x_3^7,x_1^3x_2^2x_3^2)$.
\end{conjecture}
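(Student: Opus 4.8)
The plan is to establish the theorem by exhibiting a single decomposition of the form in \Cref{thm:glue_maxrank} (or, where the Hilbert series is symmetric, \Cref{cor:symmetricHS}) for each of the four ideals in the conjecture, and, if possible, to prove the negative direction — that no other equigenerated m.a.c.i.\ of codimension three has the SLP — by combining the constraints on the Hilbert function forced by the maximal rank maps with the known obstructions. For the positive direction, note that the first family $(x_1^a,x_2^a,x_3^a,x_1^{a-1}x_2)$ is already covered by the theorem preceding the conjecture, so I would only need to handle the three remaining cases.

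First I would treat $(x_1^a,x_2^a,x_3^a,x_1^{a-2}x_2x_3)$ for $a\ge 4$. Set $K$ equal to this ideal and take $m=x_1^{a-2}$; then $I=K+(m)=(x_1^{a-2},x_2^a,x_3^a)$ and $J=K:(m)=(x_1^2,x_2^{a-1},x_3^{a-1})$, both monomial complete intersections, hence both have the SLP by Stanley's theorem \cite{stanley}, with symmetric Hilbert series. The maximal socle degree of $R/I$ is $r=(a-3)+2(a-1)=3a-5$ and that of $R/J$ is $s=1+2(a-2)=2a-3$, so $r-s=a-2=2\deg(m)$, and \Cref{cor:symmetricHS} applies directly. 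This handles the second family in one stroke. For the two sporadic ideals $(x_1^5,x_2^5,x_3^5,x_1^2x_2^2x_3)$ and $(x_1^7,x_2^7,x_3^7,x_1^3x_2^2x_3^2)$ I would search for an analogous $m$ (a pure power of $x_1$, most naturally $m=x_1^{2}$ resp.\ $m=x_1^{3}$) making $I$ and $J$ complete intersections; if in these isolated cases the colon ideal fails to be a complete intersection or the socle degrees do not match up for \Cref{cor:symmetricHS}, I would fall back on \Cref{thm:glue_maxrank} with $d$ ranging over all positive integers, verifying the two Hilbert-function implications by a short direct computation of the relevant Hilbert functions (these are concrete rank-four truncated-binomial counts, feasible by hand or by the cited \emph{Macaulay2} package).

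For the negative direction I would argue that if $(x_1^a,x_2^a,x_3^a,x_1^{b_1}x_2^{b_2}x_3^{b_3})$ with $b_1+b_2+b_3<a$ and, say, $b_1\ge b_2\ge b_3$, has the SLP, then in particular it has the WLP, so by the Migliore--Miró-Roig--Nagel analysis \cite{MMN} (together with \cite{lozenges,maci}) the multiplicity vector is severely restricted; on top of WLP, the SLP forces the Hilbert series to be unimodal and, via maximal rank of all powers of $\ell$, imposes extra symmetry/peak constraints that I would translate into inequalities on $a$ and $(b_1,b_2,b_3)$. Solving these inequalities should leave exactly the four listed families, and I would then need to confirm by the positive constructions above that each surviving case does have the SLP. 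The main obstacle I anticipate is precisely this negative half: pinning down the complete list requires either invoking a sharp enough statement from \cite{MMN,maci} about which codimension-three level m.a.c.i.'s have the WLP (and checking it is stated in a usable form) or else supplying an independent argument that SLP fails outside the list — and the two sporadic ideals strongly suggest the boundary is delicate, so the inequalities will not be clean; it is quite possible that this direction can only be settled computationally for small $a$ plus an asymptotic argument for large $a$, which is likely why the statement is posed as a conjecture rather than a theorem.
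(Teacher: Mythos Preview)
The statement is a \emph{conjecture} in the paper, not a theorem; the authors do not prove it and explicitly say it is based on computer experiments. Moreover, immediately before stating it they write that the basis splitting argument ``does not seem to apply'' to the family $(x_1^a,x_2^a,x_3^a,x_1^{a-2}x_2x_3)$. Your proposal claims to handle exactly this family via \Cref{cor:symmetricHS}, but the computation is wrong in two independent ways. First, the colon ideal is miscalculated: with $K=(x_1^a,x_2^a,x_3^a,x_1^{a-2}x_2x_3)$ and $m=x_1^{a-2}$ one gets
\[
J=K:(m)=(x_1^2,\,x_2x_3,\,x_2^a,\,x_3^a),
\]
not $(x_1^2,x_2^{a-1},x_3^{a-1})$; in particular $R/J$ is not a complete intersection and its Hilbert series $1,3,4,\ldots,4,2$ is not symmetric, so \Cref{cor:symmetricHS} is unavailable. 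Second, even with your (incorrect) $J$ the socle-degree check fails: you assert $r-s=a-2=2\deg(m)$, but $\deg(m)=a-2$ gives $2\deg(m)=2a-4$, which equals $a-2$ only for $a=2$, while you need $a\ge 4$. If one tries to salvage the decomposition via \Cref{thm:glue_maxrank} with the correct $J$, the Hilbert-function compatibility condition fails already for small $d$ (for instance $a=4$, $d=2$, $i=3$), confirming the authors' remark.

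So the positive half of your plan collapses for the second family (and likely for the two sporadic ideals as well, for the same structural reason: the colon by a pure power of $x_1$ retains the mixed monomial and is not a complete intersection). The negative half you already flag as out of reach, which is precisely why the authors leave the whole statement as a conjecture. In short, there is no proof in the paper to compare against, and your proposed route contains a concrete miscalculation at the first nontrivial step.
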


\subsection{The WLP for codimension three equigenerated m.a.c.i's}
We also make a conjecture for the WLP, again based on computer experiments.  Let $R_{a,b,c} = \Bbbk[x,y,z]/(x^d,y^d,z^d,x^a y^b z^c)$, where $ a \geq b \geq c$ and $d = a + b+ c$.


\begin{conjecture} \label{conjwlp}
The algebra $R_{a,b,c}$ fails the WLP if and only if the following two conditions hold:
\begin{enumerate}
\item  $d = 6 k  + 3$ for an integer $k$ such that $a < 4  k + 2$,
 \item  among the numbers $a,b$ and $c$, at least two are equal. 
 \end{enumerate}
\end{conjecture}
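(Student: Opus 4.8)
The plan is to attack \Cref{conjwlp} in two directions, mirroring the structure already used in the paper: a necessity part, where I show that conditions (1) and (2) force the failure of the WLP, and a sufficiency part, where I show that if either condition fails then $R_{a,b,c}$ has the WLP. For the necessity direction I would first reduce, using \Cref{thm:glue_maxrank}, to a Hilbert-function computation. Concretely, set $K=(x^d,y^d,z^d,x^ay^bz^c)$, choose the monomial $m=x^{d-a}=x^{b+c}$ (or, when two of $a,b,c$ are equal, the monomial dividing $x^ay^bz^c$ that makes the colon ideal a complete intersection), and write $I=K+(m)$, $J=K:(m)$. Both $I$ and $J$ are monomial complete intersections, hence have the SLP by Stanley's theorem \cite{stanley}; the only thing that can go wrong for $R/K$ is the Hilbert-function obstruction in \Cref{thm:glue_maxrank}, so WLP fails for $R/K$ precisely when, in the degree where the multiplication map on $R/I$ (respectively $R/J$) must be injective, the other summand's map is forced to be non-surjective. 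The arithmetic of when this clash happens should reduce exactly to the two displayed conditions: condition (1), $d=6k+3$ with $a<4k+2$, is a congruence condition guaranteeing that the two complete-intersection pieces have their ``middle'' degrees offset by the wrong parity, and condition (2), that two of $a,b,c$ coincide, is what makes the colon ideal $J$ genuinely a complete intersection with a symmetric, but mis-centred, Hilbert series. I would carry out this Hilbert-series bookkeeping with generating functions: $\HS(R/I,t)$ and $\HS(R/J,t)$ are products of the form $\prod \frac{1-t^{e_i}}{1-t}$, and I would locate the peak/plateau of each and compare.

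For the sufficiency direction the cleanest route is the reduction promised in the introduction: invoke the result of Cook II and Nagel \cite{maci} relating the WLP of these codimension-three monomial almost complete intersections to the existence of perfect matchings in an associated bipartite (lozenge-tiling) graph. The plan is to show that when $d\not\equiv 3\pmod 6$, or when $d=6k+3$ but $a\ge 4k+2$, or when $a,b,c$ are pairwise distinct, the relevant region admits the required matching, so WLP holds. For the subcase where exactly the matching combinatorics is delicate, I would instead try to apply the basis-splitting argument directly: even when \Cref{thm:glue_maxrank} does not give SLP, it can still give WLP (the $d=1$ case), and the Hilbert-function hypotheses for $d=1$ are much weaker than for all $d$; so I would check that for the ``generic'' parameter values the $d=1$ hypothesis holds with $m$ a suitable divisor of $x^ay^bz^c$, reducing WLP of $R/K$ to WLP (indeed SLP) of two complete intersections.

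The main obstacle I expect is the sufficiency direction in the boundary cases, i.e.\ pinning down exactly which parameter triples $(a,b,c)$ with $d=6k+3$ escape the failure because $a\ge 4k+2$ or because the $a,b,c$ are distinct. The basis-splitting reduction only handles the cases where a single colon-and-sum decomposition already lands on complete intersections with compatible Hilbert functions; for the remaining cases one genuinely needs the matching/lozenge input of \cite{maci}, and translating ``$a<4k+2$ fails'' into ``the bipartite graph has a perfect matching'' is the combinatorial heart of the matter. A secondary difficulty is that the necessity argument, as sketched, only shows WLP \emph{can} fail via the Hilbert-function clash for one specific choice of $m$; to get a genuine ``only if'' one must rule out that some other linear form restores maximal rank — here I would lean on the fact that for monomial algebras the sum of the variables is a Lefschetz element iff any general linear form is \cite{MMN}, so it suffices to exhibit the failure for $\ell=x+y+z$, which is exactly what the block-matrix rank computation produces. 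I would also double-check the two sporadic-looking triples from the SLP conjecture do not interfere, since they satisfy $d\not\equiv 3\pmod 6$ and hence are predicted to have the WLP, consistent with the conjecture.
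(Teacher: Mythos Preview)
First, note that \Cref{conjwlp} is a \emph{conjecture}: the paper does not prove it in full. It proves one direction (\Cref{maciWLP}: conditions (1) and (2) imply failure of the WLP) and reduces the other direction, via Cook--Nagel, to the single open case $d=6k+3$ with $4k+2>a>b>c>0$ (\Cref{wlpprop}), which it explicitly leaves unresolved.

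Your plan for the direction the paper \emph{does} prove is logically unsound. \Cref{thm:glue_maxrank} is a one-way implication: if the Hilbert-function compatibility holds and both pieces have maximal-rank multiplication, then so does $R/K$. Its proof shows the block matrix
\[
M=\begin{pmatrix} \cdot f|_{R/I} & 0\\ * & \cdot f|_{R/J}\end{pmatrix}
\]
has maximal rank \emph{if} the diagonal blocks do and are compatible. When the diagonal blocks ``clash'' (one strictly injective, the other strictly surjective), the off-diagonal block $*$ can still make $M$ full rank, so you cannot conclude failure of the WLP from failure of the hypothesis. Your sentence ``WLP fails for $R/K$ precisely when\ldots'' asserts a converse that is neither stated nor true in general. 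The paper's actual argument for \Cref{maciWLP} is entirely different: it uses the $S_2$-action permuting the two equal exponents, computes via \Cref{lemma} that $\HF(R_{a,b,c},8k+2)=\HF(R_{a,b,c},8k+3)$, shows by a direct count that the number of $S_2$-fixed monomials drops by $2$ between these degrees, and then invokes Schur's lemma to conclude that multiplication by the invariant form $x+y+z$ cannot be an isomorphism. Condition (2) is used essentially here (it supplies the symmetry), not via any colon-ideal decomposition.

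For the other direction, your plan coincides with what the paper does up to the Cook--Nagel reduction, but your fallback---``apply the basis-splitting argument directly'' to the residual case $4k+2>a>b>c>0$---is exactly what the authors report having attempted without success. So this part of your proposal is not a proof but a restatement of the open problem.
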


We can prove the sufficient part of \Cref{conjwlp}. The method of proof is inspired by the proof of \cite[Theorem 18]{powers}. 

\begin{theorem} \label{maciWLP}
Suppose the conditions in \Cref{conjwlp} hold. Then the algebra $R_{a,b,c}$
fails the WLP. 
\end{theorem}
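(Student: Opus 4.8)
The plan is to follow the strategy used in \cite{powers} for powers of monomial complete intersections, adapted to the almost complete intersection setting. Set $R_{a,b,c} = \Bbbk[x,y,z]/(x^d,y^d,z^d,x^ay^bz^c)$ with $d = a+b+c$, $a \geq b \geq c$, and let $\ell = x+y+z$. To show the WLP fails, it suffices to find a single degree $i$ in which multiplication $\cdot\ell : [R_{a,b,c}]_i \to [R_{a,b,c}]_{i+1}$ fails to have maximal rank. Under condition (1), namely $d = 6k+3$, one expects the failure to occur precisely at the middle of the Hilbert function, i.e. in the degree where the map is (or ought to be) an isomorphism between two equal-dimensional components but instead has a nontrivial kernel. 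The first step is therefore to locate this critical degree $i_0$ (presumably $i_0 = \frac{3(d-1)}{2} = 9k+3$ or a nearby value) and to record the relevant Hilbert function values of $R_{a,b,c}$, which can be computed via inclusion–exclusion from the complete intersection $(x^d,y^d,z^d)$ together with the single extra generator $x^ay^bz^c$; Lemma \ref{lemma:glue} (with $K = (x^d,y^d,z^d,x^ay^bz^c)$ obtained by adding $m = x^ay^bz^c$ to the CI ideal) gives the Hilbert series cleanly.

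The heart of the argument is to exhibit an explicit nonzero element in the kernel of $\cdot\ell$ in degree $i_0$. Following \cite[Theorem 18]{powers}, I would use the monomial complete intersection $A = \Bbbk[x,y,z]/(x^d,y^d,z^d)$, for which the SLP (indeed an explicit description of Lefschetz elements and their kernels/cokernels) is available by Stanley \cite{stanley}. The key point is that $R_{a,b,c} = A / (\bar{x^ay^bz^c})$, so there is an exact sequence relating multiplication by $\ell$ on $R_{a,b,c}$ to multiplication by $\ell$ on $A$ and on the cyclic module $(x^ay^bz^c)A \cong A/\mathrm{ann}_A(x^ay^bz^c)$, where $\mathrm{ann}_A(x^ay^bz^c) = (x^{d-a}, y^{d-b}, z^{d-c})$ up to the shift by $\deg(x^ay^bz^c)$. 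One then has a commutative diagram with exact rows, and a snake-lemma / rank count shows that $\cdot\ell$ on $R_{a,b,c}$ in degree $i_0$ fails maximal rank if the corresponding maps on $A$ and on $A/(x^{d-a},y^{d-b},z^{d-c})$ (another monomial CI) fail to ``line up'' in the way required by the maximal-rank condition. Condition (2) — at least two of $a,b,c$ equal — should be exactly what forces a symmetry (an $S_2$-action, as hinted in the surrounding text) making a certain difference of dimensions odd or making a potential pairing degenerate, so that an invariant or anti-invariant vector is forced into the kernel; this is where the parity constraint $d \equiv 3 \pmod 6$ and the bound $a < 4k+2$ enter, ensuring the relevant binomial-coefficient differences have the right sign and parity.

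Concretely, I would compute the defect $\dim [R_{a,b,c}]_{i_0} - \dim [R_{a,b,c}]_{i_0+1}$ (or rather show the map is not injective when it should be, equivalently not surjective, in the degree straddling the peak), reducing everything to an identity among binomial coefficients of the form counting lattice points in shifted copies of $\{(u,v,w) : 0 \le u,v,w < d,\ u+v+w = i_0\}$. The WLP-failure then amounts to showing a certain alternating sum of such counts is nonzero, which under conditions (1) and (2) collapses to something manifestly positive (or manifestly of the wrong parity to be $0$). I would structure the proof as: (i) reduce to the critical degree and write down the Hilbert function; (ii) set up the exact sequence $0 \to A/(x^{d-a},y^{d-b},z^{d-c})(-\deg m) \to A \to R_{a,b,c} \to 0$ of $A$-modules and the induced ladder for $\cdot\ell$; (iii) use Stanley's explicit SLP for the two monomial complete intersections $A$ and $A/(x^{d-a},y^{d-b},z^{d-c})$ to control the outer maps; (iv) invoke the two-equal hypothesis to produce an $S_2$-symmetric kernel vector and use the congruence $d = 6k+3$, $a < 4k+2$ to verify the numerical inequality that makes the middle map non-injective.

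The main obstacle I anticipate is step (iv): turning the combinatorial/numerical conditions (1) and (2) into the precise statement that the middle multiplication map has a kernel. The binomial-coefficient bookkeeping in \cite{powers} was already delicate, and here the extra generator $x^ay^bz^c$ is not a pure power, so the annihilator $A/(x^{d-a},y^{d-b},z^{d-c})$ has mixed socle degree and the "centre" of its Hilbert function is shifted relative to that of $A$; aligning the two centres is exactly governed by the arithmetic of $a,b,c$, and I expect the case analysis (which two of $a,b,c$ coincide, and the resulting parity of the dimension gap) to be the part requiring the most care. A secondary technical point is making the $S_2$-equivariance rigorous: one must check that $\ell = x+y+z$ is fixed by the relevant transposition and that the extra generator is as well (true when the two equal exponents sit on the two swapped variables, i.e. $a = b$ or after relabelling), so that the whole ladder is a ladder of $S_2$-modules and one may split off an isotypic component in which the map is forced to have nontrivial kernel.
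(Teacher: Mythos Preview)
Your outline contains the right ingredient --- the $S_2$-equivariance of multiplication by $\ell=x+y+z$ when two of $a,b,c$ coincide --- but you have inverted the priorities, and the primary route you propose (exact sequence plus snake lemma) does not by itself produce the failure. In the short exact sequence
\[
0 \longrightarrow \big(A/(x^{d-a},y^{d-b},z^{d-c})\big)(-d) \longrightarrow A \longrightarrow R_{a,b,c} \longrightarrow 0,
\]
both outer terms are monomial complete intersections, so by Stanley \emph{every} $\cdot\ell$-map on them has maximal rank. The snake lemma then gives no obstruction: whenever the two outer maps are simultaneously injective (resp.\ surjective) the middle map is too, and the delicate case --- one injective, one surjective --- yields only an inequality, not a nonzero kernel. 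So steps (ii)--(iii) of your plan cannot force failure; something extra is needed, and that ``something'' is precisely the representation-theoretic argument you list as a secondary technicality.

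The paper's proof is \emph{entirely} that argument, and it is short. First, the critical degree is not $9k+3$: a direct Hilbert-function computation (the preceding lemma) shows that, under $d=6k+3$ and $a<4k+2$, one has $\HF(R_{a,b,c},8k+2)=\HF(R_{a,b,c},8k+3)=6(2k+1)^2$, so the WLP would force $\cdot\ell:[R_{a,b,c}]_{8k+2}\to[R_{a,b,c}]_{8k+3}$ to be an isomorphism. Now suppose $b=c$ (the case $a=b$ is analogous), and let $\sigma$ swap $y$ and $z$. One counts the $\sigma$-fixed monomials $x^{d_1}(yz)^{d_2}$ in these two degrees: the map $x^{d_1}(yz)^{d_2}\mapsto x^{d_1-1}(yz)^{d_2}$ from degree $8k+3$ to degree $8k+2$ is injective, and its image misses exactly the two monomials $x^{6k+2}(yz)^{k}$ and $x^{a-1}(yz)^{(8k+3-a)/2}$ (those $m$ with $mx=0$). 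Hence the number of fixed monomials drops by $2$ from degree $8k+2$ to $8k+3$, while the total dimensions are equal; the multiplicity of the trivial $S_2$-representation therefore changes, and Schur's lemma forbids the $\sigma$-equivariant map $\cdot\ell$ from being an isomorphism. No exact sequences, no binomial bookkeeping beyond the Hilbert-function lemma, and no alignment of centres are needed.
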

In the proof we will use the following lemma.
\begin{lemma} \label{lemma}

Let $a \geq b \geq c$, and let $d = a +b + c$. Suppose that $d = 6k+3$ for some integer $k$ and that  
$a<4  k + 2$. 
Then the value of the Hilbert function of the algebra $R_{a,b,c}$ 
 in degrees $8k+2$ and $8k+3$ is equal to $6 (2k+1)^2$.

\end{lemma}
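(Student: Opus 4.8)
The plan is to compute the Hilbert function of $R_{a,b,c}=\Bbbk[x,y,z]/(x^d,y^d,z^d,x^ay^bz^c)$ directly from the inclusion–exclusion formula for the Hilbert series. Since the four generators $x^d,y^d,z^d$ and $f:=x^ay^bz^c$ (with $a+b+c=d$) have the property that every pairwise, triple, etc.\ least common multiple is again a monomial whose exponents we can write down explicitly, the Hilbert series of $R_{a,b,c}$ is
\[
\HS(R_{a,b,c},t)=\frac{1}{(1-t)^3}\Bigl(1-3t^d-t^d+\text{(corrections)}\Bigr),
\]
where the corrections come from $\operatorname{lcm}(x^d,f)=x^dy^bz^c$ of degree $d+b+c$, $\operatorname{lcm}(y^d,f)=x^ay^dz^c$ of degree $d+a+c$, $\operatorname{lcm}(z^d,f)=x^ay^bz^d$ of degree $d+a+b$, the three lcm's of pairs of pure powers (degree $2d$), the lcm's of two pure powers with $f$, and so on. So the first step is to write out this inclusion–exclusion expansion carefully and extract the coefficient of $t^{8k+2}$ and of $t^{8k+3}$, using $d=6k+3$.

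Concretely, I would set $N=8k+2$ (and then $N=8k+3$) and compute $\HF(R_{a,b,c},N)$ as an alternating sum of terms of the form $\binom{N-e+2}{2}$, one for each monomial generator of degree $e$ appearing in the expansion (with the convention that $\binom{m+2}{2}=0$ for $m<0$). The key simplifications are: $d=6k+3$, so $\binom{N-d+2}{2}$-type terms involve $N-d=2k-1$ or $2k$; the mixed terms $d+b+c=2d-a$ etc.\ have degree $2(6k+3)-a=12k+6-a$, and since $a<4k+2$ we get $12k+6-a>8k+4>N$, so \emph{all} of those mixed correction terms vanish for both $N=8k+2$ and $N=8k+3$; similarly every term of degree $\ge 2d=12k+6$ vanishes. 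Thus only $1$, the three $t^d$ terms, and the single $t^d$ term from $f$ itself survive, giving
\[
\HF(R_{a,b,c},N)=\binom{N+2}{2}-4\binom{N-d+2}{2},
\]
valid provided $N<2d-a$ and $N<2d$, both of which hold. Wait — one must double check that $f=x^ay^bz^c$ genuinely has degree $d$ and is not redundant, and that no generator of degree between $d$ and $2d-a$ was overlooked; the inequality $a\ge b\ge c$ together with $a<4k+2$ should be used to confirm $f$ is a minimal generator (otherwise $R_{a,b,c}$ would just be a complete intersection and the count changes).

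Finishing the computation: for $N=8k+3$ we have $N-d+2=2k+2$, so the formula gives $\binom{8k+5}{2}-4\binom{2k+2}{2}=\tfrac{(8k+5)(8k+4)}{2}-4\cdot\tfrac{(2k+2)(2k+1)}{2}=(8k+5)(4k+2)-2(2k+2)(2k+1)$, which one expands and checks equals $6(2k+1)^2=24k^2+24k+6$; indeed $(8k+5)(4k+2)=32k^2+36k+10$ and $2(2k+2)(2k+1)=8k^2+12k+4$, and the difference is $24k^2+24k+6$. For $N=8k+2$ we have $N-d+2=2k+1$, giving $\binom{8k+4}{2}-4\binom{2k+1}{2}=(8k+4)(4k+2) \cdot\tfrac12\cdot 2$ — more carefully $\tfrac{(8k+4)(8k+3)}{2}-4\cdot\tfrac{(2k+1)(2k)}{2}=(4k+2)(8k+3)-4k(2k+1)=32k^2+28k+6-8k^2-4k=24k^2+24k+6$, again $6(2k+1)^2$. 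So both values equal $6(2k+1)^2$ as claimed. The main obstacle I anticipate is purely bookkeeping: making sure the inclusion–exclusion over the four generators is set up with the correct signs and that every lcm-degree is correctly shown to exceed $8k+3$ using the hypothesis $a<4k+2$ (and thereby also $b,c$ are not too large); once that vanishing is established, the binomial identity is a one-line check. I would present the vanishing argument as a short lemma-internal claim ("every correction term of the inclusion–exclusion beyond the four linear terms has degree $>8k+3$") and then do the two explicit binomial evaluations.
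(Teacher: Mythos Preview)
Your proposal is correct and follows essentially the same approach as the paper: both arguments show that all pairwise (and higher) lcm's among the four generators have degree exceeding $8k+3$, so the Hilbert function in degrees $8k+2$ and $8k+3$ reduces to $\binom{N+2}{2}-4\binom{N-d+2}{2}$, which is then evaluated directly to $6(2k+1)^2$. Your concern about $f$ being a minimal generator is harmless but unnecessary, since the inclusion--exclusion formula for the Hilbert series of a monomial ideal is valid regardless of minimality.
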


\begin{proof}
We have $(x^a y^b z^c) \cap (x^d) = (x^d y^b z^c)$, and $d+b+c > 6k+3 + (6k+3-(4k+2)) = 8k+4$, showing that $(x^a y^b z^c)$ and  $(x^d)$ have empty intersection in degrees less than or equal to $8k+4$. 
We also have $(x^a y^b z^c) \cap (y^d) = (x^a y^d z^c)$, and $a+d+c \geq d + b + c > 8k+4$, so $(x^a y^b z^c)$ and $(y^d)$ have empty intersection in degrees less than or equal to $8k+4$. A similar argument applies to $(x^a y^b z^c) \cap (z^d)$. Therefore, the Hilbert function in degree $i \in [6k+3,8k+4]$ is equal to $\binom{i+2}{2} - 4 \binom{i-d+2}{2}$. A calculation shows that  
$\binom{8k+2 + 2}{2} - 4 \binom{8k+2-(6k+3)+2}{2} = \binom{8k+3 + 2}{2} - 4 \binom{8k+3-(6k+3)+2}{2}  = 6 (2k+1)^2$.
\end{proof}

\begin{proof}[Proof of \Cref{maciWLP}]
Suppose that $d = 6k+3$, $a < 4k+2$ and that among the numbers $a,b$ and $c$, at least two are equal. Since $R_{a,b,c}$ is a monomial algebra, $R_{a,b,c}$ has the WLP if and only if 
 multiplication by $x+y+z$ has maximal rank in every degree. 

{\bf Case 1:} Suppose that $b=c$, implying $a$ is odd.
First of all we would like to compute the number of monomials of $R_{a,b,b}$ in degrees $8k+2$ (respectively, $8k+3$) fixed by the permutation interchanging $y$ and $z$. Let $F_{8k+2}$ (respectively, $F_{8k+3}$) denote the corresponding set of monomials. Let $x^{d_1}(yz)^{d_2}\in F_{8k+3}$. Note that $d_1$ is odd, and thus we can divide this monomial by $x$ and get a monomial in $F_{8k+2}$. That gives us a map
\begin{align*}
 F_{8k+3} &\rightarrow F_{8k+2}\\
x^{d_1}(yz)^{d_2} &\mapsto x^{d_1-1}(yz)^{d_2}.
\end{align*}

This map is clearly injective. We would like to know how much it fails to be surjective. If this map was surjective, then every monomial in $F_{8k+2}$, multiplied by $x$, would give a monomial in $F_{8k+3}$. However, this is not the case. Indeed, there are monomials $m\in F_{8k+2}$ such that $mx=0$. These are exactly monomials outside of the image of the map above. We would like to know how many such monomials there are. If $m \neq 0$ and $mx=0$, we have two options. Either the $x$-exponent of $m$ is $d-1=6k+2$, in which case $m=x^{6k+2}(yz)^k$, or the $x$-exponent of $m$ is $a-1$, in which case $m=x^{a-1}(yz)^{\frac{8k+3-a}{2}}$. Note that this makes sense since $a$ is odd. It is easy to see that in both cases indeed $mx=0$. We conclude that $|F_{8k+2}|-|F_{8k+3}|=2$.




Consider the symmetric group on two elements and let the non-identity element act by interchanging $y$ and $z$ in $R_{a,b,c}$. From \Cref{lemma} and the above calculation, we get that the multiplicity of the trivial representation in degree $8k+2$ equals 
$(6 (2k+1)^2 + s)/2$, while the multiplicity in degree $8k+3$ equals 
$(6 (2k+1)^2 + (s - 2))/2$. Since these two numbers differ, the multiplication by $x+y+z$ can not be an isomorphism by Schur's lemma.

{\bf Case 2:} Suppose that $a=b$, implying $c$ odd. A similar argument as in Case 1 gives that if we denote by $s$ the number of monomials in degree $8k+2$ that are fixed by the permutation interchanging $x$ and $y$, then the number of monomials in the next degree that are fixed by the same permutation equals $s-2$. Therefore, we can use Schur's lemma again to conclude that the multiplication by $x+y+z$ can not be an isomorphism.
\end{proof}

\begin{rmk}
It is interesting to notice the similarity between \Cref{maciWLP} and \cite[Theorem 7.2]{MMN} -- in both cases the failure of the WLP occurs when the Hilbert function has what is referred to in \cite{MMN} as "twin peaks".
\end{rmk}

By using a special case of \cite[Theorem 4.10]{maci}, namely that if  $a \geq b \geq c$ and $d = a +b + c$, then 
the algebra $R_{a,b,c}$ has the WLP in all cases given by \Cref{conjwlp} except the case $d = 6k+3$ and $4k+2 > a > b > c>0,$ this gives, together with \Cref{maciWLP}, the following result. 

\begin{proposition} \label{wlpprop}
To show that \Cref{conjwlp} holds true, it is enough to show that the algebra $R_{a,b,c}$ has the WLP, where $d = 6k+3$ and $4k+2  > a > b > c>0$. 
\end{proposition}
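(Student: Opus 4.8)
The plan is to combine the cited result of Cook II and Nagel with the sufficiency direction already proved in \Cref{maciWLP}, reducing \Cref{conjwlp} to a single residual family. The key observation is that \Cref{conjwlp} is an ``if and only if'' statement, so I need to establish two implications: that the stated conditions imply failure of the WLP, and that their negation implies the WLP holds. The first implication is exactly \Cref{maciWLP}, so nothing remains to be done there. For the converse, I want to show that whenever conditions (1) and (2) do \emph{not} both hold, the algebra $R_{a,b,c}$ has the WLP.

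The next step is to invoke the special case of \cite[Theorem 4.10]{maci} quoted in the run-in text just before the proposition: for $a\ge b\ge c$ with $d=a+b+c$, the algebra $R_{a,b,c}$ has the WLP in every case covered by \Cref{conjwlp} \emph{except} possibly the case $d=6k+3$ and $4k+2>a>b>c>0$. I would unpack this as a case analysis on the failure of conditions (1)--(2). If condition (1) fails -- i.e., either $d\not\equiv 3\pmod 6$, or $d=6k+3$ but $a\ge 4k+2$ -- then by the cited theorem $R_{a,b,c}$ has the WLP, so the ``only if'' direction holds vacuously in that subcase. If condition (1) holds but condition (2) fails, then $d=6k+3$, $a<4k+2$, and $a,b,c$ are pairwise distinct; this is precisely the residual family $d=6k+3$, $4k+2>a>b>c>0$ that the cited theorem leaves open, and it is the only situation not yet resolved. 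Combining: \Cref{conjwlp} is equivalent to the assertion that $R_{a,b,c}$ has the WLP on this residual family, which is the statement of \Cref{wlpprop}.

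The main obstacle here is not really a mathematical difficulty in the proof of the proposition itself -- the proposition is essentially a bookkeeping consequence of two earlier results -- but rather making sure the case split over ``$(1)$ and $(2)$ both hold'' versus its negation is exhaustive and correctly aligned with the exception clause in \cite[Theorem 4.10]{maci}. In particular one must check that the exceptional family in the cited theorem is contained in (indeed equal to, after intersecting with $a\ge b\ge c$) the set where condition (1) holds and condition (2) fails, so that no case is lost: every triple with $a\ge b\ge c$ either satisfies both (1) and (2) (covered by \Cref{maciWLP}), or lies in the residual family (the content of \Cref{wlpprop}), or is handled by the WLP part of the cited theorem. Once that alignment is verified, the proof of \Cref{wlpprop} is a short paragraph: assume $R_{a,b,c}$ has the WLP on the residual family; then for any $(a,b,c)$ either the conditions of \Cref{conjwlp} hold and the WLP fails by \Cref{maciWLP}, or they do not hold, in which case either the cited theorem or the assumed residual case gives the WLP, so the biconditional is established.
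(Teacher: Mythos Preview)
Your proposal is correct and follows exactly the paper's approach: the paper's entire proof is the sentence preceding the proposition, which combines \Cref{maciWLP} (the ``if'' direction) with the cited special case of \cite[Theorem 4.10]{maci} (the ``only if'' direction away from the residual family), leaving precisely the family $d=6k+3$, $4k+2>a>b>c>0$ open. One small imprecision: your parenthetical ``indeed equal to'' is not quite right, since the case $c=0$ with $d=6k+3$, $a<4k+2$, $a>b>0$ satisfies condition (1) and violates condition (2) but is not in the residual family; however, only the containment of the residual family in the set where (1) holds and (2) fails is needed (so that Cook II--Nagel handles the complement), and that containment is clear.
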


Unfortunately, even though the case left open might at first sight look like a tractable problem, we have so far failed in our attempts in providing a proof.
\begin{rmk}
There is a misprint in \cite[Theorem 4.10]{maci}: the statement (b) (2) (II) should read 
$$a + b + c - 2 (\alpha + \beta + \gamma) \text{ is divisible by } 6 $$
instead of
$$a + b + c + \alpha + \beta + \gamma \text{ is divisible by } 6.$$
\end{rmk}

\section{An attempt of gluing ideals}
\label{sec:gluing}

Let $I^c$ denote monomials outside of $I$, as before. Let $G(I)$ denote the (unique) minimal generating set of $I$. We have seen that, given an ideal $K$ and a monomial $m$, we can define $I=K+(m)$ and $J=K:(m)$, so that $\HS(R/K,t)=\HS(R/I,t)+t^{deg(m)}\HS(R/J,t)$. In a sense, $K$ is decomposed into $I$ and $J$ with respect to $m$. We are interested in the reverse process: given $I$ and $J$ do there exist $K$ and $m$ such that $I=K+(m)$ and $J=K:(m)$? In other words, does there exist a $K$ glued from $I$ and $J$ w.\,r.\,t.\ some $m$?



If one of the ideals $I, J, (m)$ and $K$ is trivial (that is, equal to $0$ or $R$), then we will not get anything new and non-trivial. Therefore, all ideals $I$, $J$, $(m)$ and $K$ will be proper from now on.

\begin{proposition}
\label{propglue}
Let $I=I'+(m)$. Assume that $I':(m)\subseteq J$ and let $K=I'+mJ$. Then $\HS(R/K,t)=\HS(R/I,t)+t^{deg(m)}\HS(R/J,t)$.
\end{proposition}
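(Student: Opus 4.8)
The statement I want to prove is a Hilbert series identity, so the natural route is to establish the set-theoretic decomposition $K^c = I^c \sqcup mJ^c$ and then read off the Hilbert series by counting monomials in each degree, exactly as in the proof of \Cref{lemma:glue}. The new feature compared to \Cref{lemma:glue} is that $K$ is not assumed to be of the form $I'+(m)$ with $J = K:(m)$; instead we are handed $I', J, m$ with $I = I'+(m)$ and $I':(m) \subseteq J$, and we set $K = I' + mJ$. So the first thing I would check is that $K$ together with $m$ actually decomposes the way \Cref{lemma:glue} wants: namely, that $K + (m) = I$ and $K : (m) = J$. Granting those two identities, \Cref{lemma:glue} applies verbatim and gives both the disjoint-union statement and the Hilbert series identity.

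\textbf{Step 1: $K+(m) = I$.} Since $K = I' + mJ$ and $mJ \subseteq (m)$, we get $K + (m) = I' + mJ + (m) = I' + (m) = I$.

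\textbf{Step 2: $K:(m) = J$.} One inclusion is easy: $mJ \subseteq K$ gives $J \subseteq K:(m)$. For the reverse inclusion, suppose $f m \in K = I' + mJ$, say $fm = g + mh$ with $g \in I'$ and $h \in J$. Then $m(f-h) = g \in I'$, so $f - h \in I':(m) \subseteq J$, and hence $f = (f-h) + h \in J$. (Since everything is monomial, one can alternatively argue monomial-by-monomial, which is cleaner: a monomial $u$ with $mu \in I' + mJ$ either has $mu \in I'$, so $u \in I':(m) \subseteq J$, or $mu$ is a monomial in $mJ$, i.e. $mu = mv$ for some monomial $v \in J$, forcing $u = v \in J$.) Thus $K:(m) \subseteq J$, so $K:(m) = J$.

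\textbf{Conclusion.} With $K + (m) = I$ and $K:(m) = J$ established, \Cref{lemma:glue} applied to the pair $(K, m)$ yields $K^c = (K+(m))^c \sqcup m(K:(m))^c = I^c \sqcup mJ^c$, and counting monomials of each degree on both sides gives $\HF(R/K,i) = \HF(R/I,i) + \HF(R/J,i-\deg(m))$ for every $i$, which is precisely $\HS(R/K,t) = \HS(R/I,t) + t^{\deg(m)}\HS(R/J,t)$. The only place requiring any thought is Step 2, and in particular the role of the hypothesis $I':(m) \subseteq J$: it is exactly what is needed to guarantee that colon-dividing the ``$I'$ part'' of $K$ by $m$ does not land outside $J$, so that $K:(m)$ does not become strictly larger than $J$. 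I would expect this to be the main (and only) obstacle, and it is mild; the rest is bookkeeping. It is worth remarking that $K$ need not be uniquely determined by $I$ and $J$ — different choices of $I'$ (and of $J$, subject to the colon condition) can give different glued ideals $K$ — which is consistent with the exploratory tone of this section.
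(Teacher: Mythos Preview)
Your proof is correct and follows essentially the same route as the paper: both establish $K+(m)=I$ and $K:(m)=J$ and then split $K^c$ according to divisibility by $m$; the paper does this inline (computing $K:(m)=(I'+mJ):(m)=I':(m)+J=J$ directly) rather than invoking \Cref{lemma:glue}, but the content is identical. One small correction to your closing remark: once $I$, $J$, and $m$ are fixed, the glued ideal $K$ is in fact \emph{independent} of the choice of $I'$---this follows from your own argument, since $K^c=I^c\sqcup mJ^c$ depends only on $I$, $J$, $m$---and the paper makes exactly this point in the proposition that follows.
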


\begin{proof}
Monomials outside of $K$ which are not divisible by $m$ are exactly the same as monomials outside of $(m)+K=(m)+I'+mJ=(m)+I'=I$.
Monomials outside of $K$ which are divisible by $m$ are multiples of $m$ by monomials outside of $K:(m)=(I'+mJ):(m)=I':(m)+mJ:(m)=I':(m)+J=J$. 
\end{proof}

\begin{rmk}
Note that from the proof of \Cref{propglue} it follows that $I=K+(m)$ and $J=K:(m)$. Thus $K$ is gluing of $I$ and $J$ with respect to $m$. 
\end{rmk}
\begin{rmk}
 Note that $I'$ is not necessarily a proper ideal. If $I'=R$, then $I=I'+(m)=R$ (thus we shall not consider this case), but we do not rule out $I'=0$. For example, let $I'=0$, $I=I'+(m)=(m)$, where $(m)$ is a proper ideal. Then $I':(m)=0:(m)=0$ (since $(m)\not=0$) and thus we can take any proper ideal $J$. Then $K=I'+mJ=mJ$ is glued from $I=(m)$ and $J$.
\end{rmk}
\begin{rmk}
If there exists a gluing of $I$ and $J$ with respect to $m$, then $m$ belongs to $G(I)$, the set of minimal generators of $I$. Indeed, if such a gluing $K$ exists, then necessarily $I=K+(m)$ and $J=K:(m)$. In particular, $I=K+(m)$ implies $m\in I$. However, if there exists $m'\in I$ such that $m'|m$, $m'\not=m$, and given that $I=K+(m)$, we get $m'\in K$. Thus $m\in K$, in which case $I=K$, $J=K:(m)=R$, and therefore, it is a trivial gluing.
\end{rmk}

Now that we know that $m\in G(I)$, we can set $I_m:=(G(I)\backslash \{m\})$.
\begin{rmk}
\label{remm}
No element of $G(I_m)=G(I)\backslash \{m\}$ is divisible by $m$ and no element of $G(I_m)$ divides $m$ since otherwise $G(I)$ was not a minimal generating set of $I$ to start with.
\end{rmk}
In the following, by 
$B\in [A,C]$ we mean  $A \subseteq B \subseteq C$ for ideals $A,B,C.$
\begin{proposition} \label{prop:nonunique}
Let $I$, $I'$, $J$, $m$, $K$ be as in \Cref{propglue}, and let $I_m$ be as defined above. Then
\begin{enumerate}
\item $I'\in [I_m,K]$, in particular, $I_m\subseteq K$,
\item any ideal $I''\in [I_m,K]$ satisfies conditions of \Cref{propglue} and gives the same gluing $K$.
\end{enumerate}
\end{proposition}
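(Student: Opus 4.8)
\textbf{Proof plan for Proposition~\ref{prop:nonunique}.}
The strategy is to unwind the definitions carefully and reduce each claim to the ideal-arithmetic already used in \Cref{propglue} together with \Cref{remm}. Recall that $K = I'+mJ$, that $I = I'+(m)$, and that $I'\colon (m)\subseteq J$. The whole point is that $I'$ is squeezed between $I_m$ and $K$, and that any ideal in this interval recovers the same $K$ when plugged into the gluing recipe.

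For part~(1), I would first show $I_m\subseteq I'$. Every generator $g\in G(I_m)=G(I)\setminus\{m\}$ lies in $I=I'+(m)$, so $g = p + qm$ for some $p\in I'$, $q\in R$. Since everything is monomial we may assume $g$ itself is a monomial term appearing on the right, so either $g\in I'$ or $m\mid g$; but by \Cref{remm} no element of $G(I_m)$ is divisible by $m$, so $g\in I'$. Hence $I_m\subseteq I'$. The inclusion $I'\subseteq K$ is immediate from $K=I'+mJ$. This gives $I'\in[I_m,K]$ and in particular $I_m\subseteq K$.

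For part~(2), let $I''\in[I_m,K]$. I must check the two hypotheses of \Cref{propglue}: that $I''+(m)=I$ and that $I''\colon(m)\subseteq J$, and that the resulting gluing $I''+mJ$ equals $K$. For the first: $I''\supseteq I_m$ gives $I''+(m)\supseteq I_m+(m)=I$, while $I''\subseteq K=I'+mJ\subseteq I'+(m)=I$ gives $I''+(m)\subseteq I+(m)=I$; so $I''+(m)=I$, and in particular $m\in G(I)$ still plays the role required (note $m\notin I''$ since $m\notin K$, which one sees because $K^c\ni m$ by \Cref{lemma:glue} as $(R/K)$ decomposes with $m\cdot 1\neq 0$ — alternatively $m\in K=I'+mJ$ would force $1\in I'\colon(m)+J\subseteq J$, contradicting $J$ proper). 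For the containment $I''\colon(m)\subseteq J$: from $I''\subseteq K$ we get $I''\colon(m)\subseteq K\colon(m)$, and the computation in the proof of \Cref{propglue} shows $K\colon(m)=(I'+mJ)\colon(m)=I'\colon(m)+J=J$, so $I''\colon(m)\subseteq J$ as needed. Finally, for the gluing produced by $I''$: we must show $I''+mJ=K$. Clearly $I''+mJ\subseteq K+mJ=K$ since $I''\subseteq K$ and $mJ\subseteq K$. Conversely $K=I'+mJ$, and $I'\subseteq I''$ was not assumed — instead $I'$ and $I''$ both lie in $[I_m,K]$, so I argue directly: $K=I'+mJ\subseteq$ ? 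Here I would use that $I'\subseteq I''+mJ$: indeed $I'\subseteq K = I''+mJ$? This is circular, so instead observe $K = I_m + mJ$.

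The cleanest route, and the step I expect to be the main obstacle, is therefore to first establish the identity $K=I_m+mJ$, i.e.\ that $I'$ can be replaced by its lower endpoint $I_m$ without changing the gluing; then every $I''\in[I_m,K]$ satisfies $I_m+mJ\subseteq I''+mJ\subseteq K+mJ=K=I_m+mJ$, forcing equality. To prove $K=I_m+mJ$: the inclusion $I_m+mJ\subseteq I'+mJ=K$ follows from $I_m\subseteq I'$ (part~1). For the reverse, it suffices to show $I'\subseteq I_m+mJ$. Take a monomial $u\in I'$. Then $u\in I$, so $u$ is divisible by some generator of $I=(G(I_m)\cup\{m\})$; if $u$ is divisible by an element of $G(I_m)$ then $u\in I_m$ and we are done, otherwise $m\mid u$, say $u=mv$, and then $v\in I'\colon(m)\subseteq J$, so $u=mv\in mJ$. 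Either way $u\in I_m+mJ$, completing the proof. The subtlety to get right is the monomial bookkeeping — that membership of a monomial in a monomial ideal means divisibility by a minimal generator — and the repeated use of \Cref{remm} to exclude the degenerate divisibility relations between $m$ and $G(I_m)$; everything else is formal ideal algebra borrowed verbatim from the proof of \Cref{propglue}.
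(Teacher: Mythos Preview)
Your proposal is correct and follows essentially the same route as the paper: both parts hinge on the identity $K=I_m+mJ$, after which everything is a sandwich argument. The only cosmetic difference is that you prove $I'\subseteq I_m+mJ$ (using the hypothesis $I'\!:\!(m)\subseteq J$) and then add $mJ$, whereas the paper proves $K\subseteq I_m+mJ$ directly (using the already-established $K\!:\!(m)=J$); the detour through ``this is circular'' and the aside about $m\notin K$ are unnecessary but harmless.
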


\begin{proof}
\,
\begin{enumerate}
\item The inclusion $I'\subseteq K$ is obvious since $K=I'+mJ$. Assume $I_m\not\subseteq I'$. Then there is an element $e\in I_m\backslash I'$. Note that in this case $e$ can be chosen to be in $G(I_m)$. Then $e\in I_m+(m)=I=I'+(m)$, that is, $e\in (m)$, which is a contradiction to $e\in G(I_m)$ according to \Cref{remm}. We can conclude that $I_m\subseteq I'$.
\item Let $I''\in [I_m,K]$. Then $I''+(m)\in [I_m+(m),K+(m)]=[I,I]=I$. Also, $I'':(m)\subseteq K:(m)=J$, that is, the conditions of \Cref{propglue} are satisfied. Then $I''+mJ\in[I_m+mJ,K+mJ]=[I_m+mJ,K]$. If we prove that $I_m+mJ=K$, we are done. We already know that $I_m\subseteq K$ and $mJ\subseteq K$, thus it is enough to show that $K\subseteq I_m+mJ$. Assume $e\in K\backslash I_m$. Then $e\in K+(m)=I=I_m+(m)$, which implies $e\in (m)$, say, $e=me_1$. Then $e_1\in (e):(m)\subseteq K:(m)=J$, that is, $e_1\in J$ and thus $e=me_1\in mJ$. \qedhere
\end{enumerate}
\end{proof}

\begin{ex}
\label{ex:big} Let $I'=(x^2,y^3,z^4,xy^2,xz^3,xyz)$ and $m=x$. Then $$I=(x)+I'=(x,y^3,z^4),$$
$$I':(x)=(x,\cancel{y^3},\cancel{z^4},y^2,z^3,yz)=:J,$$ and 
\begin{align*}
K=I'+xJ=&(x^2,y^3,z^4,xy^2,xz^3,xyz)+x(x,y^2,z^3,yz)\\
=&(x^2,y^3,z^4,xy^2,xz^3,xyz).
\end{align*}
Clearly $\Bbbk[x,y,z]/I$ has the SLP, and it is easily checked that so does $\Bbbk[x,y,z]/J$. Moreover, $K,m,I,J$ satisfy the assumptions of \Cref{thm:glue_maxrank} for any $d$, so it follows that $\Bbbk[x,y,z]/K$ has the SLP.
\end{ex}

\begin{rmk}
If in \Cref{ex:big} we instead let 
$I':=(y^3,z^4)$, then $$I=(x)+I'=(x,y^3,z^4),$$
$$I':(x)=(y^3,z^4)\subseteq (x,y^2,z^3,yz)=:J,$$
$$K=I'+xJ=(y^3,z^4)+x(x,y^2,z^3,yz)=(x^2,y^3,z^4,xy^2,xz^3,xyz),$$
which shows that the gluing of $I$ and $J$ with respect to $m$, if it exists, is independent of $I'$, demonstrating \Cref{prop:nonunique}. In \Cref{ex:big} we took the biggest posible choice of $I'$ which is $I'=K$. Here we are taking the smallest possible choice, $I'=I_m$.
\end{rmk}

Let $I$ be a nontrivial ideal given by its minimal generating set $G(I)$. From all of the above we see that:
\begin{enumerate}
\item Any potential monomial $m$ for gluing is contained in $G(I)$. So let us choose some $m\in G(I)$.
\item As soon as $I$ and $m\in G(I)$ are chosen, we have the following. Assume that $I'$ and $J$ are chosen to satisfy the gluing conditions and $K=I'+mJ$. Then we know that any $I''\in [I_m, K]$ would do the same gluing. Thus, when $I$ and $m$ are known, we can without loss of generality set $I'=I_m$, even though $J$ is not known yet.
\item Now we can choose any $J$ such that $I_m:(m)\subseteq J$. In particular, every element of $G(I)$ can be used for some nontrivial gluing since a nontrivial choice of $J\supseteq I_m:(m)$ will always exist. Indeed, this is only impossible if $I_m:(m)=R$, that is, $m\in I_m$, which is a contradiction according to \Cref{remm}. 
\end{enumerate}
\begin{rmk}
Note that if we choose $J=I_m:(m)$, then $K=I_m+mJ=I_m$ and the interval $[I_m, K]$ becomes a single point.
\end{rmk}
We reach the following conclusion.
\begin{theorem} The ideals
$I$ and $J$ can be glued together if and only if there exists $m\in G(I)$ such that $I_m:(m)\subseteq J$. In this case the gluing is done with respect to $m$ and $K=I_m+mJ$. 
\end{theorem}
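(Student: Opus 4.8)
The plan is to prove both directions of the equivalence by carefully unwinding the definitions and the preceding propositions, so that essentially no new work is required beyond organizing what has already been established.

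For the ``only if'' direction, I would assume $I$ and $J$ can be glued, meaning there exist a monomial $m$ and a proper ideal $K$ with $I = K + (m)$ and $J = K : (m)$. The first observation (already recorded in the remark preceding \Cref{remm}) is that $m \in G(I)$: from $I = K+(m)$ we get $m \in I$, and if some proper divisor $m' \neq m$ of $m$ lay in $I$, then $m' \in K + (m)$ forces $m' \in K$ (since $m'$ is not a multiple of $m$), hence $m \in K$, hence $I = K$ and $J = R$, contradicting nontriviality. Having $m \in G(I)$, I can form $I_m = (G(I) \setminus \{m\})$, and I must check $I_m : (m) \subseteq J = K : (m)$. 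By \Cref{remm}, no generator of $I_m$ is divisible by $m$, so $I_m \subseteq K$: indeed any $e \in G(I_m)$ lies in $I = K + (m)$ but not in $(m)$, so $e \in K$. Since $I_m \subseteq K$, we get $I_m : (m) \subseteq K : (m) = J$, as desired.

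For the ``if'' direction, I would suppose there exists $m \in G(I)$ with $I_m : (m) \subseteq J$, and set $I' := I_m$ and $K := I_m + mJ$. I need to verify that $K$ genuinely glues $I$ and $J$ with respect to $m$, i.e.\ that $I = K + (m)$ and $J = K : (m)$. This is exactly the content of \Cref{propglue} together with the remark following it, applied with $I' = I_m$: the hypotheses of \Cref{propglue} require $I = I' + (m)$ (true since $I_m + (m) = I$ because $m \in G(I)$) and $I' : (m) \subseteq J$ (our standing assumption), and the conclusion — together with the remark that $I = K+(m)$ and $J = K:(m)$ — gives precisely that $K = I_m + mJ$ is a gluing of $I$ and $J$ with respect to $m$. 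One should also note this gluing is nontrivial in the relevant sense provided $J$ is proper and $m \notin I_m$, which again holds by \Cref{remm}; this matches the running conventions of the section.

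I do not expect a serious obstacle here: the theorem is a summary statement, and both directions reduce to invoking \Cref{propglue}, \Cref{prop:nonunique}, and \Cref{remm}, all of which are already available. The only point requiring mild care is making the ``only if'' direction clean — specifically arguing that any $m$ and $K$ witnessing a gluing must have $m \in G(I)$ and $I_m \subseteq K$, so that the concrete reconstruction $K = I_m + mJ$ is forced (this is where \Cref{prop:nonunique}(1), which places $I_m \subseteq I' \subseteq K$, does the work). I would close by remarking that \Cref{prop:nonunique}(2) shows the resulting $K$ does not depend on the auxiliary choice of $I'$, so the formula $K = I_m + mJ$ is canonical given $I$, $J$, and $m$.
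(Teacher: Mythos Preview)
Your proposal is correct and follows essentially the same route as the paper: the theorem is stated there as a conclusion of the preceding discussion (items (1)--(3) before the theorem), and your argument is a faithful formalization of that, invoking \Cref{propglue}, its following remark, \Cref{remm}, and \Cref{prop:nonunique} in exactly the intended way. The only minor addition worth making explicit in the ``only if'' direction is that the given $K$ actually equals $I_m+mJ$; you gesture at this via \Cref{prop:nonunique}(2), and it indeed follows once you observe (as you do) that $I'=K$ itself satisfies the hypotheses of \Cref{propglue}, so that \Cref{prop:nonunique}(2) with $I''=I_m$ yields $I_m+mJ=K$.
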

\begin{ex} \label{ex:last}
Let $I=(x^3,y^2,z^4)$, $J=(x,y,z)$. Any monomial in $G(I)$ can be chosen to be $m$. If, say, $m=x^3$, then $I_m=(y^2,z^4)$ and since $I_m:(m)\subseteq J$, we  have $K=I_m+mJ=(y^2,z^4)+x^3(x,y,z)=(x^4,y^2,z^4,x^3y,x^3z)$. 
Note that $I$ and $J$ are monomial complete intersections; the maximal socle degree of $R/I$ is $6$, the maximal socle degree of $R/J$ is $0$, and $\deg(m)=3$. Thus $K$, $m$, $I$, $J$ satisfy the conditions of \Cref {cor:symmetricHS} and therefore $K$ is even a centre-to-centre gluing of $I$ and $J$. In particular, $R/K$ has the SLP in the narrow sense. Another way to see it is to note that $K$ is the ideal associated to the table in \Cref{gluetable}.

\begin{figure}[H]

\centering
\begin{tikzpicture}[scale=1.5]

\node [above, thin] at (1,0) {$4$};
\node [above, thin] at (2,0) {$2$};
\node [above, thin] at (3,0) {$4$};
\node [above, thin] at (1,-1) {$1$};
\node [above, thin] at (2,-1) {$1$};
\node [above, thin] at (3,-1) {$3$};
\foreach \x in {1,...,3}{
    \foreach \y in {-1,...,0}{
    \fill[fill=black] (\x,\y) circle (0.03 cm);
    }}
\draw[red,thick]  (2,-1)--(3,-1);
\draw[red,thick] (2,-1)--(1,0);  
\end{tikzpicture}
\caption{A table giving the ideal in \Cref{ex:last}.}
\label{gluetable}
\end{figure}
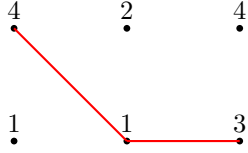 

\end{ex}

Note that if for given $I$ and $J$ there exist several suitable choices of $m$, then we will necessarily obtain different gluings. Indeed, let $K=I_{m_1}+m_1J=I_{m_2}+m_2J$, where $m_1, m_2\in G(I), m_1\not=m_2$. Then $m_1\in I_{m_2}\subseteq K$. Then $m_1\in I_{m_1}+m_1J$, which implies $J=K=R$.

\section*{Acknowledgement}
The authors are grateful to Nasrin Altafi, Chris McDaniel, Mart\'i Salat, Akihito Wachi, and Junzo Watanabe for discussions on the applicability of our monomial decomposition method during the partly virtual workshop on the Lefschetz Properties in Algebra, Geometry and Combinatorics at Oberwolfach in the fall 2020. These discussions gave us more insight of the method, and led us to Example \ref{ex:not_glued}, which provides a negative answer to a question in an earlier version of this paper. 

We also thank the referee for careful reading and valuable comments.

\end{document}